\newtheorem{theo+}              {Theorem}           [section]
\newtheorem{prop+}  [theo+]     {Proposition}
\newtheorem{coro+}  [theo+]     {Corollary}
\newtheorem{lemm+}  [theo+]     {Lemma}
\newtheorem{exam+}  [theo+]     {Example}
\newtheorem{rema+}  [theo+]     {Remark}
\newtheorem{defi+}  [theo+]     {Definition}
\def \r{\mbox{${\mathbb R}$}}
\newenvironment{theorem}{\begin{theo+}}{\end{theo+}}
\newenvironment{proposition}{\begin{prop+}}{\end{prop+}}
\newenvironment{corollary}{\begin{coro+}}{\end{coro+}}
\newenvironment{lemma}{\begin{lemm+}}{\end{lemm+}}
\newenvironment{definition}{\begin{defi+}}{\end{defi+}}
\theoremstyle{plain} \theoremstyle{remark}
\newtheorem{remark}{Remark}
\newtheorem{example}{Example}
\renewcommand{\Bbb}{\mathbb}
\newcommand{\rank}{\mbox{rank}}
\def\E{/\kern-1.0em \equiv }
\def\rank{rank}
\title{Some remarks on bi-$f$-harmonic maps and $f$-biharmonic maps}
\author{Yong Luo$^{*}$ and Ye-Lin Ou $^{**}$}
\thanks{$^{*}$  Yong Luo was supported by the NSF of China (No.11501421, No.11771339).\newline\indent
$^{**}$ Ye-Lin Ou was supported by a grant from the Simons Foundation ($\#427231$, Ye-Lin Ou).}
\address{School of Mathematics and Statistics, \newline\indent Wuhan University\newline\indent Wuhan 430072, \newline\indent China. \newline\indent E-mail: yongluo@whu.edu.cn \newline\indent
\newline\indent
Department of
Mathematics,\newline\indent Texas A $\&$ M University-Commerce,
\newline\indent Commerce, TX 75429,\newline\indent USA.\newline\indent
E-mail:yelin.ou@tamuc.edu }
\begin{document}

\title[Bi-$f$-harmonic maps $\&$ $f$-biharmonic maps]{Some remarks on bi-$f$-harmonic maps and $f$-biharmonic maps}

\subjclass{58E20, 53C43} \keywords{Biharmonic, $f$-biharmonic, and bi-$f$-harmonic maps, $f$-Laplacian, $f$-bi-Laplacian, and bi-$f$-Laplacian.}
\date{08/06/2018}
\maketitle

\section*{Abstract}
In this paper, we prove that  the class of  bi-$f$-harmonic maps and that of $f$-biharmonic maps from a conformal manifold of dimension $\ne 2$ are the same (Theorem \ref{MT1}).   We also give several results on nonexistence of proper bi-$f$-harmonic maps and  $f$-biharmonic maps  from complete Riemannian manifolds into nonpositively curved Riemannian manifolds. These include: any bi-$f$-harmonic map from a compact manifold into a non-positively curved manifold is $f$-harmonic (Theorem \ref{NPC}), and any $f$-biharmonic (respectively, bi-$f$-harmonic) map  with bounded $f$ and bounded $f$-bienrgy (respectively, bi-$f$-energy) from a complete Riemannian manifold into a manifold of strictly negative curvature has rank $\le 1$ everywhere (Theorems \ref{pro2} and \ref{pro4}).
\section{A relationship between $f$-biharmonic maps and bi-$f$-harmonic maps}

We assume all objects studied in this paper, including manifolds, tension fields, and maps,  are smooth unless they are stated otherwise.\\
{\em Harmonic maps} are critical points of the energy functional
\begin{equation}\notag
E(\phi)=\frac{1}{2}\int_\Omega |{\rm d}\phi|^2v_g
\end{equation}
for maps $\phi: (M,g)\longrightarrow (N,h)$ between Riemannian manifolds and for all  compact domain $\Omega \subseteq M$.  {\em Harmonic map equation } is simply the Euler Lagrange equation  of the energy functional which is given (see \cite{ES}) by
\begin{equation}\label{fhm}
\tau(\phi) \equiv {\rm Tr}_g\nabla\,d \phi=0,
\end{equation}
where $\tau(\phi)={\rm Tr}_g\nabla\,d \phi$ is  the tension field of
the map $\phi$.\\

{\em Biharmonic maps}  are critical points of the bienergy functional defined by
\begin{equation}\notag
E_{2}(\phi)=\frac{1}{2}\int_\Omega |\tau(\phi)|^2v_g,
\end{equation}
where $\Omega$ is a compact domain of $ M$ and $\tau(\phi)$ is the tension field of
the map $\phi$. {\em  Biharmonic map
equation} (see \cite{Ji}) is the
Euler-Lagrange equation of the bienergy functional, which can be written as
\begin{equation}\label{BTF}
\tau_{2}(\phi):={\rm
Tr}_{g}[(\nabla^{\phi}\nabla^{\phi}-\nabla^{\phi}_{\nabla^{M}})\tau(\phi)
-R^{N}({\rm d}\phi, \tau(\phi)){\rm d}\phi ]=0,
\end{equation}
where $R^{N}$ denotes the curvature operator of $(N, h)$ defined by
$$R^{N}(X,Y)Z=
[\nabla^{N}_{X},\nabla^{N}_{Y}]Z-\nabla^{N}_{[X,Y]}Z.$$ Clearly, any harmonic map is always a biharmonic map.\\

{\em $f$-Harmonic maps} (see \cite{Li}) are critical points of the $f$-energy functional defined by
\begin{equation}\notag
E_{f}(\phi)=\frac{1}{2}\int_\Omega f\,|{\rm d}\phi|^2v_g,
\end{equation}
where $\Omega$ is a compact domain of $ M$ and $f$ is a positive function on $M$.  {\em $f$-Harmonic map equation} can be written ( see, e.g., \cite{Co},
\cite{OND}) as
\begin{equation}\label{fhm}
\tau_f(\phi)\equiv f\tau(\phi)+{\rm d}\phi({\rm grad}\,f)=0,
\end{equation}
where $\tau(\phi)={\rm Tr}_g\nabla\,d \phi$ is the tension field of
$\phi$. It is easily seen that an $f$-harmonic map with $f=C$ is nothing but a harmonic map.\\

{\em $f$-Biharmonic maps} are critical points of the $f$-bienergy functional
\begin{equation}\notag
E_{f,2}(\phi)=\frac{1}{2}\int_\Omega f\,|\tau(\phi)|^2v_g,
\end{equation}
for maps $\phi: (M,g)\longrightarrow (N,h)$ between Riemannian manifolds and all compact domain $\Omega\subseteq M$. {\em $f$-Biharmonic map equation} can be written as (see \cite{Lu})
\begin{equation}\label{F2}
\tau_{f,2} (\phi)\equiv f\tau_2(\phi)+(\Delta f)\tau(\phi)+2\nabla^{\phi}_{{\rm grad}\, f}\tau(\phi)=0,
\end{equation}
where $\tau(\phi)$ and $\tau_2( \phi)$ are the tension and the bitension fields of
$\phi$ respectively. \\

{\em Bi-$f$-harmonic maps }  $\phi: (M,g)\longrightarrow (N,h)$ between Riemannian manifold which are critical points of the bi-$f$-energy functional
\begin{equation}\label{Bif}
E_{2,f}(\phi)=\frac{1}{2}\int_\Omega |\tau_f (\phi)|^2v_g,
\end{equation}
over all compact domain $\Omega$ of $ M$. The Euler-Lagrange
equation gives the bi-$f$-harmonic map equation ( \cite{OND})
\begin{equation}\label{2F}
\tau_{2,f} (\phi)\equiv -f J^{\phi}(\tau_f(\phi))+\nabla^{\phi}_{{\rm grad}\, f}\tau_f(\phi)=0,
\end{equation}
where $\tau_f(\phi)$ is the $f$-tension field of the map
$\phi$ and $J^{\phi}$ is the Jacobi operator of the map defined by $J^{\phi}(X)=-{\rm Tr}_g[\nabla^{\phi}\nabla^{\phi}X-\nabla^{\phi}_{\nabla^M}X-R^N(d \phi,\;X)d \phi$].

We would like to point out that in some literature, e.g., \cite{OND}, \cite{Ch}, the name ``$f$-biharmonic maps" was also used for the critical points of the bi-$f$-energy functional (\ref{Bif}).

From the above definitions, we can easily see the following inclusion relationships among the different types of biharmonic maps which  give two different paths of generalizations of harmonic maps.
\begin{eqnarray}\notag
\{ Harmonic\; maps\}\subset \{ Biharmonic \;maps\}\subset \{ f-Biharmonic \;maps\},\\\notag
\{Harmonic\; maps\}\subset \{ f-harmonic \;maps\} \subset \{Bi-f-harmonic \;maps\}.
\end{eqnarray}

For the obvious reason, we will call a bi-$f$-harmonic map which is not $f$-harmonic a {\em proper bi-$f$-harmonic map}, and an $f$-biharmonic map which is not harmonic a {\em proper $f$-biharmonic map}.\\

Concerning harmonicity and $f$-harmonicity, we have a classical result of Lichnerowicz \cite{Li} which states that  a map $\phi: (M^m, g)\longrightarrow (N^n, h)$ with $m \ne 2$ is an $f$-harmonic map if and only if it is a harmonic map with respect to a metric conformal to $g$. Concerning the relationship between biharmonic maps and $f$-biharmonic maps, it was proved in \cite{Ou2} that a map $\phi: (M^2, g)\longrightarrow (N^n, h)$ from a $2$-dimensional manifold is an $f$-biharmonic map if and only if it is a biharmonic map with respect to the metric ${\bar g}=f^{-1}g$ conformal to $g$.

Our first theorem shows that although $f$-biharmonic maps and bi-$f$-harmonic maps are quite different by their definitions, they belong to the same class in the sense that any bi-$f$-harmonic map $\phi: (M^m, g)\longrightarrow (N^n, h)$ with $m \ne 2$ and $f=\alpha$ is an $f$-biharmonic map for $f=\alpha^{m/(m-2)}$ with respect to some metric conformal to $g$, and conversely, any $f$-biharmonic map $\phi: (M^m, g)\longrightarrow (N^n, h)$ with $m \ne 2$ and $f=\alpha$ is a bi-$f$-harmonic map for $f=\alpha^{(m-2)/m}$ with respect to some metric conformal to $g$.
\begin{theorem}\label{MT1}
For  $m \ne 2$ and a map $\phi: (M^m, g)\longrightarrow (N^n, h)$ between Riemannian manifolds, we have
\begin{equation}\label{LK}
\tau_{2,f} (\phi,g)=f^{\frac{m}{m-2}}\,\tau_{f^{\frac{m}{m-2}},2}(\phi, \bar{g}),
\end{equation}
where $ \bar{g}=f^{\frac{2}{m-2}}g$. In particular, a map $\phi: (M^m, g)\longrightarrow (N^n, h)$ is a bi-$f$-harmonic map if and only if it is an $f^{\frac{m}{m-2}}$-biharmonic after the conformal change of the metric $ \bar{g}=f^{\frac{2}{m-2}}g$ in the domain manifold, conversely, a map $\phi: (M^m, g)\longrightarrow (N^n, h)$ is an $f$-biharmonic map if and only if it is a bi-$f^{\frac{m-2}{m}}$-harmonic after the conformal change of the metric $ \bar{g}=f^{-\frac{2}{m}}g$.
\end{theorem}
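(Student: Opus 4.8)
The plan is to avoid transforming the second-order operators $\tau_2$, $J^\phi$ and the lower-order terms $\Delta f$, $\nabla^\phi_{{\rm grad}\,f}$ one at a time under the conformal change, and instead to argue variationally: I would show that the bi-$f$-energy $E_{2,f}(\phi,g)$ and the $f^{m/(m-2)}$-bienergy $E_{f^{m/(m-2)},2}(\phi,\bar g)$ are literally the \emph{same} functional of $\phi$, and then read off (\ref{LK}) by comparing their first variations. Set $F:=f^{m/(m-2)}$ and $\bar g=f^{2/(m-2)}g=e^{2\sigma}g$ with $\sigma=\frac{1}{m-2}\ln f$.

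First I would record the one piece of genuine geometry, the conformal transformation of the tension field. The classical Lichnerowicz formula gives
\[
\tau(\phi,\bar g)=e^{-2\sigma}\bigl[\tau(\phi,g)+(m-2)\,{\rm d}\phi({\rm grad}_g\sigma)\bigr].
\]
Since ${\rm grad}_g\sigma=\frac{1}{(m-2)f}{\rm grad}_g f$ and $e^{-2\sigma}=f^{-2/(m-2)}$, the bracket collapses to $f^{-1}\tau_f(\phi,g)$, so that $\tau_f(\phi,g)=F\,\tau(\phi,\bar g)$; this is exactly the Lichnerowicz relation underlying the $f$-harmonic/harmonic correspondence. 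Together with $v_{\bar g}=F\,v_g$, this is all the input required.

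Substituting $\tau_f(\phi,g)=F\,\tau(\phi,\bar g)$ and $v_g=F^{-1}v_{\bar g}$ into the bi-$f$-energy gives $|\tau_f(\phi,g)|^2v_g=F\,|\tau(\phi,\bar g)|^2v_{\bar g}$, hence $E_{2,f}(\phi,g)=E_{F,2}(\phi,\bar g)$ for every $\phi$. Their first variations along any compactly supported variation field $V$ must then agree, and expressing each in terms of its own Euler-Lagrange operator and its own volume form yields
\[
\int_\Omega\langle\tau_{2,f}(\phi,g),V\rangle\,v_g=\int_\Omega\langle\tau_{F,2}(\phi,\bar g),V\rangle\,v_{\bar g}=\int_\Omega\langle F\,\tau_{F,2}(\phi,\bar g),V\rangle\,v_g.
\]
Since $V$ is arbitrary, the fundamental lemma forces $\tau_{2,f}(\phi,g)=F\,\tau_{F,2}(\phi,\bar g)$, which is precisely (\ref{LK}). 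The first equivalence follows at once, because $F>0$ makes $\tau_{2,f}(\phi,g)=0$ equivalent to $\tau_{F,2}(\phi,\bar g)=0$. For the converse I would apply (\ref{LK}) with base metric $\hat g=f^{-2/m}g$ and bi-parameter $\tilde f=f^{(m-2)/m}$: then ${\tilde f}^{m/(m-2)}=f$ and the doubly conformal metric ${\tilde f}^{2/(m-2)}\hat g=f^{2/m}\cdot f^{-2/m}g$ collapses back to $g$, so (\ref{LK}) reads $\tau_{2,f^{(m-2)/m}}(\phi,\hat g)=f\,\tau_{f,2}(\phi,g)$, giving the stated equivalence between $f$-biharmonicity with respect to $g$ and bi-$f^{(m-2)/m}$-harmonicity with respect to $\hat g$.

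With this route the only real computation is the conformal tension-field formula, which is standard; the rest is bookkeeping. The points needing care are the factor $F$ produced by the differing volume forms when passing from equality of functionals to equality of Euler-Lagrange operators, and the exponent arithmetic in the converse substitution. The brute-force alternative, transforming $J^\phi$, $\Delta f$ and $\nabla^\phi_{{\rm grad}\,f}\tau_f$ directly and thereby dragging in the conformal behaviour of the Jacobi operator and the curvature terms, is the main obstacle that the variational argument is designed to sidestep.
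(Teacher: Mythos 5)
Your argument is correct, and it takes a genuinely different route from the paper's. The paper proves (\ref{LK}) by a direct operator computation: after establishing the same key identity $\tau_f(\phi,g)=f^{\frac{m}{m-2}}\tau(\phi,\bar g)$ that you use, it invokes the conformal transformation law for the Jacobi operator, $J_{g}(X)=f^{\frac{2}{m-2}}J_{\bar g}(X)+f^{-1}\nabla^{\phi}_{\mathrm{grad}\,f}X$ (formula (3) of \cite{Ou1}), under which the first-order term $\nabla^{\phi}_{\mathrm{grad}\,f}\tau_f(\phi)$ in (\ref{2F}) cancels, leaving $\tau_{2,f}(\phi,g)=-f^{\frac{m}{m-2}}J_{\bar g}\bigl(f^{\frac{m}{m-2}}\tau(\phi,\bar g)\bigr)$; it then expands this with $J(fX)=fJ(X)-(\Delta f)X-2\nabla^{\phi}_{\mathrm{grad}\,f}X$ and recognizes the $f^{\frac{m}{m-2}}$-bitension field of $(M,\bar g)$. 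Your variational argument sidesteps both Jacobi-operator formulas: the pointwise identity $|\tau_f(\phi,g)|^2v_g=f^{\frac{m}{m-2}}|\tau(\phi,\bar g)|^2v_{\bar g}$ makes the two functionals literally equal for every map, and the factor $f^{\frac{m}{m-2}}$ in (\ref{LK}) then falls out of the ratio of volume forms when you localize the first variation. The one point you should make explicit is that your conclusion is an identity between Euler--Lagrange operators, whereas (\ref{LK}) is an identity between the explicit differential expressions (\ref{2F}) and (\ref{F2}); these coincide only because both formulas are normalized with the same sign convention in their respective first-variation formulas ($\delta E=-\int\langle\tau_{\bullet},V\rangle\,dv$), a compatibility that the paper's direct computation verifies automatically but that your argument takes as input from \cite{Lu} and \cite{OND}. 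Your handling of the converse (substituting $\hat g=f^{-2/m}g$ and $\tilde f=f^{(m-2)/m}$, so that $\tilde f^{m/(m-2)}=f$ and $\tilde f^{2/(m-2)}\hat g=g$) is exactly right, and in fact spells out the exponent bookkeeping that the paper leaves implicit in the phrase ``from which the last statement of the theorem follows.''
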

\begin{proof}
A straightforward computation gives the transformation of the tension fields under the conformal change
of a metric $\bar{g}=F^{-2}g$:
\begin{eqnarray}\notag
 \tau(\phi,{\bar g})&=& F^2\{\tau (\phi, g)-(m-2){\rm
d}{\phi}({\rm grad\,ln}F)\}.
\end{eqnarray}

When $m \ne 2$ and $F^{-2}=f^{\frac{2}{m-2}}$, we have
\begin{eqnarray}\notag
 \tau(\phi,{\bar g})&=& f^{\frac{-2}{m-2}}\tau (\phi, g)+f^{\frac{-m}{m-2}}{\rm
d}{\phi}({\rm grad\,}f)\\\notag &=& f^{\frac{-m}{m-2}}\left(f
\tau (\phi, g)+{\rm d}{\phi}({\rm grad\,}f)\right) =
f^{\frac{-m}{m-2}}\tau_f (\phi, g).
\end{eqnarray}
It follows that
\begin{eqnarray}\label{fTF}
\tau_f (\phi, g)=  f^{\frac{m}{m-2}} \tau(\phi,{\bar g}).
\end{eqnarray}

Now we use formula (3) in \cite{Ou1} to have the following formula of changes of Jacobi operators under the conformal change of metrics $ \bar{g}=f^{\frac{2}{m-2}}g$
\begin{equation}\notag
J_{g}(X) =f^{\frac{2}{m-2}}J_{\bar g}(X)+f^{-1}\nabla^{\phi}_{{\rm grad} f}(X).
\end{equation}
By using this and (\ref{2F}), we have
\begin{eqnarray}\notag
\tau_{2,f} ({\phi}, g)&= &-f J^{\phi}(\tau_f(\phi))+\nabla^{\phi}_{{\rm grad}\, f}\tau_f(\phi)\\\notag
&=&-f \left(f^{\frac{2}{m-2}}J_{\bar g}(\tau_f(\phi))+f^{-1}\nabla^{\phi}_{{\rm grad} f} \tau_f(\phi\right)+\nabla^{\phi}_{{\rm grad}\, f}\tau_f(\phi)\\\label{9}
&=&-f^{\frac{m}{m-2}}J_{\bar g}(\tau_f(\phi)).\\\notag
\end{eqnarray}
Substituting (\ref{fTF}) into (\ref{9}) yields
\begin{eqnarray}\label{GD10}
\tau_{2,f} (\phi,g)&=&-f^{\frac{m}{m-2}}J_{\bar g}\left(f^{\frac{m}{m-2}} \tau(\phi,{\bar g})\right).\\\notag
\end{eqnarray}
By a straightforward computation using (see (7) in \cite{Ou1})
\begin{equation}\notag
J(fX)=fJ(X)-(\Delta f)X-2\nabla^{\phi}_{{\rm grad} f}X,
\end{equation}
we can rewrite (\ref{GD10})  as
\begin{eqnarray}\notag
\tau_{2,f} (\phi)&=&-f^{\frac{m}{m-2}}J_{\bar g}\left(f^{\frac{m}{m-2}} \tau(\phi,{\bar g})\right) \\\notag
&=&f^{\frac{m}{m-2}}\left( -f^{\frac{m}{m-2}} J_{\bar g}(\tau(\phi,{\bar g})+ (\Delta_{\bar {g}} f^{\frac{m}{m-2}} )\tau(\phi,{\bar g})+2\nabla^{\phi}_{{\rm grad}_{\bar g} f^{\frac{m}{m-2}} }\tau(\phi,{\bar {g}})\right)\\\notag
&=&f^{\frac{m}{m-2}}\,\tau_{f^{\frac{m}{m-2}},2}(\phi, \bar{g}),
\end{eqnarray}
where the third equality was obtained by using the $f$-bi-tension field (\ref{F2}) and the fact that $-J_{\bar g}(\tau(\phi, {\bar g})=\tau_2(\phi, {\bar g})$. Thus, we obtain the relationship (\ref{LK}) from which the last statement of the theorem follows.
\end{proof}

Many examples of proper $f$-biharmonic maps were given in \cite{Ou2}, from which and Theorem \ref{MT1} we have
\begin{example}
It was proved in \cite{Ou2} that  for $m\ge 3$, the map $\phi: \r^m\setminus \{0\}\longrightarrow \r^m\setminus \{0\}$ with
$\phi(x)=\frac{x\;}{|x|^2}$ is a proper $f$-biharmonic map for $f(x)=|x|^4$.  Using Theorem \ref{MT1} we conclude that the map $\phi: (\r^m\setminus \{0\}, |x|^{-8/m}\delta_{ij}) \longrightarrow \r^m\setminus \{0\}$  with
$\phi(x)=\frac{x\;}{|x|^2}$ is a proper bi-$f$-harmonic map for $f(x)=|x|^{4(m-2)/m}$. In particular, when $m=4$, we have a bi-$f$-harmonic map $\phi: (\r^4\setminus \{0\}, |x|^{-2}\delta_{ij}) \longrightarrow \r^4\setminus \{0\}$  with
$\phi(x)=\frac{x\;}{|x|^2}$ for $f(x)=|x|^2$.
\end{example}

\begin{corollary}
A map $\phi: (M^m, g)\longrightarrow (N^n, h)$ with $m \ne 2$ is a bi-$f$-harmonic map  if and only if
\begin{eqnarray}
f^{\frac{m}{m-2}}\tau_2(\phi, {\bar g})+(\Delta_{\bar g} f^{\frac{m}{m-2}})\tau(\phi, {\bar g})+2\nabla^{\phi}_{{\rm grad}_{\bar g}f^{\frac{m}{m-2}}}\tau(\phi, {\bar g})=0,
\end{eqnarray}
where ${\bar g}=f^{\frac{2}{m-2}}g$, and $\tau(\phi, {\bar g})$ and $\tau_2(\phi, {\bar g})$ are the tension and the bi-tension fields of the map $\phi: (M^m, {\bar g})\longrightarrow (N^n, h)$ respectively.
\end{corollary}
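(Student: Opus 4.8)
The plan is to read off the corollary directly from the master identity (\ref{LK}) established in Theorem \ref{MT1}, so that essentially no new computation is needed. First I would recall that, by definition (\ref{Bif})--(\ref{2F}), the map $\phi$ is bi-$f$-harmonic precisely when its bi-$f$-tension field vanishes, i.e.\ $\tau_{2,f}(\phi,g)=0$. Theorem \ref{MT1} provides the pointwise identity $\tau_{2,f}(\phi,g)=f^{\frac{m}{m-2}}\,\tau_{f^{\frac{m}{m-2}},2}(\phi,\bar g)$ with $\bar g=f^{\frac{2}{m-2}}g$, so the entire argument reduces to unpacking the right-hand side.

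Next I would expand $\tau_{f^{\frac{m}{m-2}},2}(\phi,\bar g)$ by specializing the $f$-biharmonic map equation (\ref{F2}) to the substitutions $f\mapsto f^{\frac{m}{m-2}}$ and $g\mapsto\bar g$. This produces exactly the three-term expression $f^{\frac{m}{m-2}}\tau_2(\phi,\bar g)+(\Delta_{\bar g}f^{\frac{m}{m-2}})\tau(\phi,\bar g)+2\nabla^{\phi}_{{\rm grad}_{\bar g}f^{\frac{m}{m-2}}}\tau(\phi,\bar g)$ displayed in the statement, provided one keeps track that $\Delta_{\bar g}$ and ${\rm grad}_{\bar g}$ are taken with respect to the rescaled metric $\bar g$, and that $\tau(\phi,\bar g)$, $\tau_2(\phi,\bar g)$ denote the tension and bitension fields of $\phi:(M^m,\bar g)\to(N^n,h)$.

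Finally, combining the two formulas shows that $\tau_{2,f}(\phi,g)=0$ if and only if $f^{\frac{m}{m-2}}$ multiplied by this three-term expression vanishes. Since $f$ is a positive function, the conformal factor $f^{\frac{m}{m-2}}$ is nowhere zero and may be cancelled pointwise; hence the vanishing of $\tau_{2,f}(\phi,g)$ is equivalent to the vanishing of the three-term expression by itself, which is the asserted equivalence. The one point deserving any attention --- and it is scarcely an obstacle --- is this positivity remark justifying the division by $f^{\frac{m}{m-2}}$, together with maintaining a consistent bookkeeping of the $\bar g$-dependence of $\Delta_{\bar g}$, ${\rm grad}_{\bar g}$, $\tau(\phi,\bar g)$ and $\tau_2(\phi,\bar g)$ throughout.
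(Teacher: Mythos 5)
Your proposal is correct and follows exactly the route of the paper's own (one-line) proof: apply the identity (\ref{LK}) from Theorem \ref{MT1}, expand $\tau_{f^{m/(m-2)},2}(\phi,\bar g)$ via the $f$-biharmonic map equation (\ref{F2}), and cancel the nowhere-vanishing factor $f^{\frac{m}{m-2}}$. No discrepancies to report.
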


\begin{proof}
This follows from Theorem \ref{MT1} and the $f^{\frac{m}{m-2}}$-biharmonic map equation (\ref{F2}).
\end{proof}

\begin{proposition}
A function $u: (M^m, g)\longrightarrow \r$ is bi-$f$-harmonic if and only if it is a solution of the following bi-$f$-Laplace equation
\begin{equation}
\Delta^2_f\,u=\Delta_f(\Delta_f\,u)=0,
\end{equation}
where $\Delta_f$ is the $f$-Laplace operator defined by
\begin{equation}\label{fDelta}
\Delta_f\,u=f\Delta\,u+g(\nabla f, \nabla u).
\end{equation}
\end{proposition}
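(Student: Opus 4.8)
The plan is to specialize the general bi-$f$-harmonic map equation (\ref{2F}) to the case in which the target is the real line $\r$ with its canonical flat metric and the map is a scalar function $u:(M^m,g)\longrightarrow\r$. The claim will follow once I show that, in this setting, $\tau_{2,f}(u)$ collapses to $\Delta_f(\Delta_f u)$, so that the vanishing of the bi-$f$-tension field is literally the bi-$f$-Laplace equation.

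First I would compute the $f$-tension field of $u$. Since the tension field of a scalar function is $\tau(u)={\rm Tr}_g\nabla\,du=\Delta u$ and $du({\rm grad}\,f)=g(\nabla f,\nabla u)$, the formula (\ref{fhm}) gives $\tau_f(u)=f\Delta u+g(\nabla f,\nabla u)=\Delta_f u$; thus the $f$-tension field of a function is exactly its $f$-Laplacian (\ref{fDelta}). Next I would evaluate the Jacobi operator on the pullback bundle $u^{-1}T\r$. This bundle is trivial, so a section is canonically a real function on $M$ and the pullback connection $\nabla^{u}$ acts as ordinary differentiation; moreover, since $\r$ is flat, the curvature term $R^N$ in the definition of $J^{\phi}$ vanishes identically. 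Hence the Jacobi operator reduces to the negative rough Laplacian, $J^{u}(X)=-{\rm Tr}_g[\nabla^{u}\nabla^{u}X-\nabla^{u}_{\nabla^M}X]=-\Delta X$, the ordinary Laplacian acting on the scalar $X$.

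It then remains to substitute $X=\tau_f(u)=\Delta_f u$ into (\ref{2F}). Using $J^{u}(\Delta_f u)=-\Delta(\Delta_f u)$ and $\nabla^{u}_{{\rm grad}\,f}(\Delta_f u)=g(\nabla f,\nabla(\Delta_f u))$, I obtain
\[
\tau_{2,f}(u)=-f\,J^{u}(\Delta_f u)+\nabla^{u}_{{\rm grad}\,f}(\Delta_f u)=f\,\Delta(\Delta_f u)+g(\nabla f,\nabla(\Delta_f u))=\Delta_f(\Delta_f u),
\]
where the last equality is just the definition (\ref{fDelta}) applied to $\Delta_f u$. Therefore $u$ is bi-$f$-harmonic precisely when $\Delta_f^2 u=\Delta_f(\Delta_f u)=0$, which proves both directions at once.

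The computation is essentially a direct substitution, so there is no real analytic difficulty; the only point that needs genuine care is the bookkeeping of sign conventions and the identification of sections of the flat pullback bundle $u^{-1}T\r$ with scalar functions, so that the rough Laplacian ${\rm Tr}_g[\nabla^{u}\nabla^{u}X-\nabla^{u}_{\nabla^M}X]$ is correctly matched with the geometer's Laplacian $\Delta$ appearing in (\ref{fDelta}).
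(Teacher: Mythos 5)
Your proof is correct and follows essentially the same route as the paper: specialize the bi-$f$-tension field to a real-valued target, identify $\tau_f(u)$ with $\Delta_f u$, drop the curvature term since $\r$ is flat, and observe that the remaining terms reassemble into $\Delta_f(\Delta_f u)$. The only (immaterial) difference is an overall sign: the paper's displayed computation starts from $+fJ^u(\tau_f(u))-\nabla^u_{{\rm grad}f}\tau_f(u)$ and lands on $-\Delta_f^2u\,\partial/\partial t$, whereas you apply (\ref{2F}) literally and get $+\Delta_f^2 u$; either way the vanishing of $\tau_{2,f}(u)$ is equivalent to $\Delta_f^2 u=0$.
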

\begin{proof}
For a real-valued function $u: (M^m, g)\longrightarrow \r$, one can easily check that the tension field is $\tau(u)=(\Delta u)\frac{\partial}{\partial t}$, and hence $\tau_f(u)=f (\Delta u)\frac{\partial}{\partial t}+du({\rm grad}f)=(\Delta_fu)\frac{\partial}{\partial t}$. It follows that the bi-$f$-tension field of $u$ is given by
\begin{eqnarray}\notag
\tau_{2, f}(u)&=& f J^{u}(\tau_f(u))-\nabla^{u}_{{\rm grad}\, f}\tau_f(u)\\\notag
&=&-f{\rm Tr}_g[\nabla^{u}\nabla^{u}\tau_f(u)-\nabla^{u}_{\nabla^M}\tau_f(u)-R^{N}(d u,\;\tau_f(u))d u]-\nabla^{u}_{{\rm grad}\, f}\tau_f(u)\\\notag
&=&-f{\rm Tr}_g[\nabla^{u}\nabla^{u}\tau_f(u)-\nabla^{u}_{\nabla^M}\tau_f(u)]-\nabla^{u}_{{\rm grad}\, f}\tau_f(u)\\\notag
&=&-\left(f{\sum_{i=1}^m[e_i(e_i (\Delta_fu))-\nabla^M_{e_i} e_i (\Delta_fu)]+\langle {\rm grad}\, f, \nabla (\Delta_fu)\rangle}\right)\frac{\partial}{\partial t}\\
&=&-[\Delta_f(\Delta_f u)]\frac{\partial}{\partial t}=-(\Delta_f^2 u)\frac{\partial}{\partial t},
\end{eqnarray}
from which the proposition follows.
\end{proof}

Following the practice of calling  $\Delta^2 \,u:=\Delta (\Delta \,u)$ the bi-Laplacian on $(M^m, g)$ we make the following definitions.
\begin{definition}
For a function $u: (M^m, g)\longrightarrow \r$ on a Riemannian manifold, we define the bi-$f$-Laplacian $\Delta_{2,f}$ acting on functions by
\begin{equation}
\Delta_{2,f} \,u:=\Delta_f^2 \,u=\Delta_f(\Delta_f \,u),
\end{equation}
and the $f$-bi-Laplacian by
\begin{equation}
\Delta_{f,2} \,u= \Delta(f\Delta u)=f\Delta^2 \,u+(\Delta\, f)\Delta \,u+2g(\nabla f, \nabla\Delta u).
\end{equation}
The solutions of the bi-$f$-Laplace equation $\Delta_f^2 \,u=0$ and that of the $f$-bi-Laplace equation $\Delta_{f,2} \,u=0$ are called bi-$f$-harmonic functions and $f$-biharmonic functions respectively.

\end{definition}

\begin{proposition}
For a real-valued function $u$ on a Riemannian manifold $(M^m, g)$ with $m \ne 2$, we have the following relationship between bi-$f$-Laplacian and $f^{\frac{m}{m-2}}$-bi-Laplacian operators
\begin{equation}
\Delta_{2,f}=\Delta_f^2 u= f^{\frac{m}{m-2}}{\bar \Delta}_{f^{\frac{m}{m-2}}, 2} u=f^{\frac{m}{m-2}} \Delta_{\bar g}\left(f^{\frac{m}{m-2}})\Delta_{\bar g} u\right),
\end{equation}
where ${\bar \Delta}_{f^{\frac{m}{m-2}}, 2}u:= f^{\frac{m}{m-2}}\Delta^2_{\bar g}u+(\Delta_{\bar g}f^{\frac{m}{m-2}})\Delta_{\bar g} u+2 \bar {g}({\rm grad}_{\bar {g}}f^{\frac{m}{m-2}}, {\rm grad}_{{\bar g}}\Delta^2_{\bar g} u)$ is the $f^{\frac{m}{m-2}}$-bi-Laplacian of the conformal metric ${\bar g}=f^{\frac{2}{m-2}}g$. In particular, a function $u: (M^m, g)\longrightarrow \r$ is bi-$f$-harmonic if and only if it is $f^{\frac{m}{m-2}}$-biharmonic with respect  the conformal metric ${\bar g}=f^{\frac{2}{m-2}}g$.
\end{proposition}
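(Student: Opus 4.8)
The plan is to read off this proposition as the specialization of Theorem \ref{MT1} to a real-valued map, using the identification of tension fields with weighted Laplacians already established in the preceding Proposition. Concretely, I would regard $u$ as a map $u\colon (M^m,g)\longrightarrow (\mathbb{R},\mathrm{can})$ into the flat real line and apply (\ref{LK}) with $\phi=u$, which gives
\begin{equation}\notag
\tau_{2,f}(u,g)=f^{\frac{m}{m-2}}\,\tau_{f^{\frac{m}{m-2}},2}(u,\bar g),\qquad \bar g=f^{\frac{2}{m-2}}g.
\end{equation}
Since both sides are now vector fields along $u$ proportional to $\frac{\partial}{\partial t}$, the operator identity will follow by comparing the coefficients of $\frac{\partial}{\partial t}$.

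First I would compute the left-hand side. By the same calculation as in the preceding Proposition, $\tau(u,g)=(\Delta u)\frac{\partial}{\partial t}$ and $\tau_f(u,g)=(\Delta_f u)\frac{\partial}{\partial t}$, and hence $\tau_{2,f}(u,g)=(\Delta_f^2 u)\frac{\partial}{\partial t}$ (up to the overall sign convention fixed in that Proposition). This identifies the left-hand side with the bi-$f$-Laplacian $\Delta_{2,f}u=\Delta_f^2 u$.

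Next I would compute the right-hand side, i.e.\ the $F$-bitension field of $u$ viewed as a map $(M^m,\bar g)\longrightarrow(\mathbb{R},\mathrm{can})$ with weight $F=f^{\frac{m}{m-2}}$. The key simplification is that the target $\mathbb{R}$ is flat, so the curvature term in the bitension field (\ref{BTF}) drops out and $\tau_2(u,\bar g)=(\Delta^2_{\bar g}u)\frac{\partial}{\partial t}$; feeding $\tau(u,\bar g)=(\Delta_{\bar g}u)\frac{\partial}{\partial t}$ and this into the $f$-biharmonic map equation (\ref{F2}) with $f$ replaced by $F$ yields
\begin{equation}\notag
\tau_{F,2}(u,\bar g)=\Big(F\Delta^2_{\bar g}u+(\Delta_{\bar g}F)\Delta_{\bar g}u+2\bar g(\mathrm{grad}_{\bar g}F,\mathrm{grad}_{\bar g}\Delta_{\bar g}u)\Big)\frac{\partial}{\partial t}=(\bar\Delta_{F,2}u)\frac{\partial}{\partial t},
\end{equation}
which by the Definition is exactly $\Delta_{\bar g}\big(F\Delta_{\bar g}u\big)\frac{\partial}{\partial t}$. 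Substituting both sides back into (\ref{LK}) and cancelling $\frac{\partial}{\partial t}$ gives $\Delta_f^2 u=f^{\frac{m}{m-2}}\bar\Delta_{F,2}u=f^{\frac{m}{m-2}}\Delta_{\bar g}\big(f^{\frac{m}{m-2}}\Delta_{\bar g}u\big)$; since $f>0$, the two operators have the same kernel, which is the ``in particular'' statement that $u$ is bi-$f$-harmonic iff it is $f^{\frac{m}{m-2}}$-biharmonic for $\bar g$.

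I expect the only delicate point to be bookkeeping rather than anything deep: one must verify that the flatness of the target really removes the curvature term so that the bitension field collapses to the iterated Laplacian $\Delta_{\bar g}^2 u$, and that the resulting three terms line up precisely with the definition of $\bar\Delta_{F,2}$ (in particular that the last term carries $\mathrm{grad}_{\bar g}\Delta_{\bar g}u$). One could instead prove the identity directly from the conformal transformation law for $\Delta_{\bar g}$, but routing it through Theorem \ref{MT1} avoids redoing the conformal computation and keeps the argument parallel to the Corollary.
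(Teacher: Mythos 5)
Your proof is correct but takes a genuinely different route from the paper's. The paper proves the identity directly: it first checks $\Delta_f u = f^{\frac{m}{m-2}}\bar\Delta u$ (the function version of the transformation (\ref{fTF})), then iterates to get $\Delta_f^2 u = f^{\frac{m}{m-2}}\bar\Delta\bigl(f^{\frac{m}{m-2}}\bar\Delta u\bigr)$ and expands via the product rule $\bar\Delta(Fv)=F\bar\Delta v+(\bar\Delta F)v+2\bar g(\bar\nabla F,\bar\nabla v)$ --- a self-contained two-line computation that never touches curvature terms or Jacobi operators. You instead specialize Theorem \ref{MT1} to the map $u\colon M\to\mathbb{R}$ and translate both sides into weighted Laplacians, which is legitimate and has the virtue of exhibiting the proposition as the literal scalar shadow of (\ref{LK}); the price is that you must track the flat-target simplification of the bitension field and, more delicately, the sign conventions. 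On that last point be careful: the preceding Proposition's displayed computation opens with $\tau_{2,f}(u)=fJ^u(\tau_f(u))-\nabla^u_{\mathrm{grad}f}\tau_f(u)$, which has the opposite sign to the definition (\ref{2F}) and so lands on $\tau_{2,f}(u)=-(\Delta_f^2u)\tfrac{\partial}{\partial t}$; if you import that sign while using $+f^{\frac{m}{m-2}}\tau_{F,2}(u,\bar g)$ from (\ref{LK}) you would get a spurious minus sign in the final identity. Working consistently from (\ref{2F}) gives $\tau_{2,f}(u)=+(\Delta_f^2u)\tfrac{\partial}{\partial t}$ and everything matches; your hedge ``up to the overall sign convention'' covers this, and in any case the ``in particular'' kernel statement is unaffected. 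You are also right that the $\mathrm{grad}_{\bar g}\Delta^2_{\bar g}u$ appearing in the stated definition of $\bar\Delta_{f^{m/(m-2)},2}$ should read $\mathrm{grad}_{\bar g}\Delta_{\bar g}u$, consistent with the Definition of $\Delta_{f,2}$.
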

\begin{proof}
Using ${\bar \Delta} $ and ${\bar \nabla}$ to denote the Laplacian and the gradient operators on $(M^m, {\bar g}=f^{\frac{2}{m-2}}g)$, we can easily check that $\Delta_f u= f^{\frac{m}{m-2}}{\bar \Delta} u$. A straightforward computation yields
\begin{eqnarray}\notag
\Delta_f^2 \,u&=&\Delta_f(\Delta_f \,u)=f^{\frac{m}{m-2}}{\bar \Delta}(f^{\frac{m}{m-2}}{\bar \Delta}\,u)\\\notag
&=& f^{\frac{m}{m-2}}\left( f^{\frac{m}{m-2}}{\bar \Delta}^2\,u+({\bar \Delta}f^{\frac{m}{m-2}}){\bar \Delta}\,u+2{\bar g}({\bar \nabla} f^{\frac{m}{m-2}}, {\bar \nabla} {\bar \Delta}^2\,u )\right)\\
&=& f^{\frac{m}{m-2}}{\bar \Delta}_{ f^{\frac{m}{m-2}},2}\,u,
\end{eqnarray}
from which we obtain the proposition.
\end{proof}

In a recent paper \cite{Ch}, Chiang proved that any bi-$f$-harmonic map from a compact manifold without boundary into a non-positively curved manifold satisfying
\begin{equation}\label{KD}
\langle f\nabla^{\phi}_{e_i}\nabla^{\phi}_{e_i}\tau_f(\phi)-\nabla^{\phi}_{e_i}f\nabla^{\phi}_{e_i}\tau_f(\phi), \tau_f(\phi)\rangle\ge 0
\end{equation}
 is an $f$-harmonic map. Our next theorem gives an improvement of this result by dropping the condition (\ref{KD}), and hence gives a generalization of Jiang's result (Proposition 7 in \cite{Ji}) on biharmonic maps viewed as bi-$f$-harmonic maps with $f$ being a constant.
\begin{theorem}\label{NPC}
Any bi-$f$-harmonic map $\phi: (M^m, g)\longrightarrow (N^n, h)$ from a compact Riemannian manifold without boundary into a non-positively curved manifold  is an $f$-harmonic map.
\end{theorem}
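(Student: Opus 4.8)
The plan is to run a Bochner-type integration-by-parts argument directly on the bi-$f$-harmonic equation (\ref{2F}), testing it against the $f$-tension field itself. Write $\sigma := \tau_f(\phi)$, and recall from the definition of the Jacobi operator that $J^{\phi}(\sigma) = \Delta^{\nabla}\sigma + {\rm Tr}_g\,R^N({\rm d}\phi, \sigma){\rm d}\phi$, where $\Delta^{\nabla}\sigma := -{\rm Tr}_g(\nabla^{\phi}\nabla^{\phi} - \nabla^{\phi}_{\nabla^M})\sigma$ is the nonnegative rough Laplacian (so that $\int_M\langle\Delta^{\nabla}\sigma,\sigma\rangle\,v_g = \int_M|\nabla^{\phi}\sigma|^2\,v_g$). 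Since $M$ is compact without boundary, I would pair $\tau_{2,f}(\phi) = -fJ^{\phi}(\sigma) + \nabla^{\phi}_{{\rm grad}\,f}\sigma = 0$ with $\sigma$ and integrate, obtaining $-\int_M f\langle J^{\phi}(\sigma),\sigma\rangle\,v_g + \int_M\langle\nabla^{\phi}_{{\rm grad}\,f}\sigma,\sigma\rangle\,v_g = 0$.

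Next I would handle the weighted rough-Laplacian term by applying the divergence theorem to the vector field $W = \frac{f}{2}\,{\rm grad}\,|\sigma|^2$, which yields the identity
\begin{equation}\notag
\int_M f\langle \Delta^{\nabla}\sigma, \sigma\rangle\, v_g = \int_M f|\nabla^{\phi}\sigma|^2\, v_g + \int_M \langle \nabla^{\phi}_{{\rm grad}\,f}\sigma, \sigma\rangle\, v_g.
\end{equation}
Substituting this into the tested equation, the two occurrences of $\int_M\langle\nabla^{\phi}_{{\rm grad}\,f}\sigma,\sigma\rangle\,v_g$ cancel exactly, and we are left with
\begin{equation}\notag
\int_M f|\nabla^{\phi}\sigma|^2\, v_g + \int_M f\sum_i\langle R^N({\rm d}\phi(e_i), \sigma){\rm d}\phi(e_i), \sigma\rangle\, v_g = 0.
\end{equation}
This cancellation is precisely the mechanism that lets us discard Chiang's auxiliary hypothesis (\ref{KD}). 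Since $f>0$, the first integrand is nonnegative; since $N$ has nonpositive sectional curvature, $\langle R^N({\rm d}\phi(e_i),\sigma){\rm d}\phi(e_i),\sigma\rangle = -\langle R^N({\rm d}\phi(e_i),\sigma)\sigma,{\rm d}\phi(e_i)\rangle \ge 0$, so the second integrand is nonnegative too. Both integrals must therefore vanish; in particular $\nabla^{\phi}\sigma = 0$, i.e. $\tau_f(\phi)$ is a parallel section of $\phi^*TN$ with constant length.

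The main obstacle is that parallelism alone does not force $\sigma = 0$: exactly as in Jiang's biharmonic argument, a second divergence identity is needed, and here it must additionally absorb the conformal term ${\rm d}\phi({\rm grad}\,f)$ built into $\sigma = f\tau(\phi) + {\rm d}\phi({\rm grad}\,f)$. To resolve this I would introduce the vector field $Z$ dual to the $1$-form $X \mapsto \langle {\rm d}\phi(X), \sigma\rangle$. Using $\nabla^{\phi}\sigma = 0$ one computes ${\rm div}\,Z = \langle \tau(\phi), \sigma\rangle$, so $\int_M\langle\tau(\phi),\sigma\rangle\,v_g = 0$; a further integration by parts against $f$ gives $\int_M\langle {\rm d}\phi({\rm grad}\,f), \sigma\rangle\,v_g = \int_M\langle{\rm grad}\,f, Z\rangle\,v_g = -\int_M f\langle\tau(\phi),\sigma\rangle\,v_g$. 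Adding these two facts to the pointwise splitting $|\sigma|^2 = f\langle\tau(\phi),\sigma\rangle + \langle {\rm d}\phi({\rm grad}\,f),\sigma\rangle$ produces $\int_M|\tau_f(\phi)|^2\,v_g = 0$, hence $\tau_f(\phi) \equiv 0$ and $\phi$ is $f$-harmonic.

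Finally, I would note that the conformal correspondence of Theorem \ref{MT1} does not shortcut the proof: it only identifies bi-$f$-harmonic maps with $f^{\frac{m}{m-2}}$-biharmonic maps with respect to $\bar g$, a weight that is generally nonconstant, so Jiang's biharmonic nonexistence result cannot be invoked after the change of metric and a direct weighted argument on $(M,g)$ is the natural route.
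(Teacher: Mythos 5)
Your proof is correct, and it reaches the paper's two key intermediate conclusions ($\nabla^{\phi}\tau_f(\phi)=0$, then $\int_M|\tau_f(\phi)|^2v_g=0$) by a genuinely different first step. The paper runs a \emph{pointwise} Bochner argument: it computes $\tfrac12\Delta|\tau_f(\phi)|^2$ using (\ref{2F}), absorbs the first-order term $\langle\nabla|\tau_f(\phi)|^2,\nabla\ln f\rangle$ into the $f$-Laplacian to get $\Delta_f\bigl(|\tau_f(\phi)|^2\bigr)\ge 0$, and invokes the maximum principle on the closed manifold to conclude $|\tau_f(\phi)|$ is constant and $\nabla^{\phi}\tau_f(\phi)=0$. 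You instead test the equation against $\sigma=\tau_f(\phi)$ and integrate, so the troublesome term $\int_M\langle\nabla^{\phi}_{{\rm grad}f}\sigma,\sigma\rangle v_g$ cancels exactly against its twin produced by the weighted integration by parts; this is Jiang's original integral argument carried over verbatim with the weight $f$, it avoids the maximum principle entirely, and it makes the removal of Chiang's hypothesis (\ref{KD}) especially transparent. The trade-off is that the paper's pointwise inequality $\Delta_{-\ln f}|\tau_f(\phi)|\ge 0$ is exactly what survives to the complete noncompact setting (via the Wang--Xu Liouville theorem in Lemma \ref{main lem}), whereas your global integration is tied to compactness. Your endgame is the same divergence trick as the paper's, just split into two steps: the paper takes $Y$ dual to $X\mapsto\langle f\,d\phi(X),\tau_f(\phi)\rangle$ and gets $\di Y=|\tau_f(\phi)|^2$ in one stroke, while you use the unweighted form and a second integration by parts against $f$ (your fact $\int_M\langle\tau(\phi),\sigma\rangle v_g=0$ is actually not needed; the identity $\int_M\langle d\phi({\rm grad}f),\sigma\rangle v_g=-\int_M f\langle\tau(\phi),\sigma\rangle v_g$ alone closes the argument). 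Both routes are sound.
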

\begin{proof}

A straightforward computation yields
 $$\frac{1}{2}\Delta|\tau_f(\phi)|^2=|\nabla^\phi\tau_f(\phi)|^2+\langle\Delta^\phi\tau_f(\phi), \tau_f(\phi)\rangle.$$
Using this and  Equation (\ref{2F}) we have
\begin{eqnarray*}
\frac{1}{2}\Delta|\tau_f(\phi)|^2&=&|\nabla^\phi\tau_f(\phi)|^2+\langle R^N(d\phi, \tau_f(\phi))d\phi, \tau_f(\phi)\rangle-f^{-1}\langle\nabla_{\nabla f}^\phi\tau_f(\phi), \tau_f(\phi)\rangle
\\&=&|\nabla^\phi\tau_f(\phi)|^2-R^N(d\phi, \tau_f(\phi),d\phi, \tau_f(\phi))-\frac{1}{2}\langle\nabla|\tau_f(\phi)|^2, \nabla\ln f\rangle.
\end{eqnarray*}
It follows that
\begin{eqnarray}\notag
&& \frac{1}{2}\Delta|\tau_f(\phi)|^2+\frac{1}{2}\langle\nabla|\tau_f(\phi)|^2, \nabla\ln f\rangle\\\label{GD50}
=&&|\nabla^\phi\tau_f(\phi)|^2-R^N(d\phi, \tau_f(\phi),d\phi, \tau_f(\phi))
\geq 0,
\end{eqnarray}
which implies that
\begin{eqnarray*}
&&\Delta_f \left( |\tau_f(\phi)|^2\right)\geq 0,
\end{eqnarray*}
where $\Delta_f$ is the $f$-Laplacian defined by (\ref{fDelta}). Applying the maximum principle we conclude that
$|\tau_f(\phi)|$ is constant and $\nabla^\phi\tau_f(\phi)=0$ on $M$.

To complete the proof, we define a vector field on $M$ by $Y=\langle f\nabla\phi, \tau_f(\phi)\rangle$. Then
\begin{eqnarray*}
divY=|\tau_f(\phi)|^2+\langle f\nabla\phi, \nabla^\phi\tau_f(\phi)\rangle=|\tau_f(\phi)|^2,
\end{eqnarray*}
which, together with Stokes's theorem, implies that $|\tau_f(\phi)|=0$, i.e. $\phi$ is an $f$-harmonic map. Thus, we obtain the theorem.
\end{proof}

From Theorem \ref{NPC} we have
\begin{corollary}\label{TE}
Any bi-$f$-harmonic map $\phi: (M^m, g)\longrightarrow \r^n$ from a compact manifold into a Euclidean space is a constant map.
\end{corollary}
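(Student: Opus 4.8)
The plan is to combine Theorem \ref{NPC} with a standard divergence-form argument. First I would observe that the Euclidean space $\r^n$ is flat, hence trivially non-positively curved, so the hypotheses of Theorem \ref{NPC} are met. That theorem then guarantees that the bi-$f$-harmonic map $\phi$ is in fact $f$-harmonic, i.e. it solves $\tau_f(\phi)=f\tau(\phi)+{\rm d}\phi({\rm grad}\,f)=0$. The task thus reduces to showing that an $f$-harmonic map from a compact boundaryless manifold into $\r^n$ must be constant.

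Next I would decouple the vector equation into scalar ones. Writing $\phi=(\phi^1,\dots,\phi^n)$ in the standard Euclidean coordinates and using that the target connection is flat, the tension field has components $\tau(\phi)^\alpha=\Delta\phi^\alpha$, exactly as in the scalar computation already used for the Proposition above. Consequently the $f$-harmonic equation splits into the $n$ scalar equations $\Delta_f\phi^\alpha=f\Delta\phi^\alpha+\langle{\rm grad}\,f,{\rm grad}\,\phi^\alpha\rangle=0$, one for each $\alpha$, where $\Delta_f$ is the $f$-Laplacian of (\ref{fDelta}).

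The key algebraic identity I would exploit is that $\Delta_f u=f\Delta u+\langle\nabla f,\nabla u\rangle={\rm div}(f\,\nabla u)$, so that each component satisfies ${\rm div}(f\,\nabla\phi^\alpha)=0$. Multiplying by $\phi^\alpha$, integrating over the compact manifold $M$ without boundary, and applying Stokes's theorem yields $\int_M f\,|\nabla\phi^\alpha|^2\,v_g=-\int_M \phi^\alpha\,{\rm div}(f\,\nabla\phi^\alpha)\,v_g=0$. Since $f$ is a positive function, this forces $\nabla\phi^\alpha=0$ for every $\alpha$, so each component is constant and $\phi$ is a constant map.

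There is essentially no serious obstacle here, as the result is a direct corollary; the only point worth flagging is that, unlike Theorem \ref{MT1}, this argument needs no dimension restriction $m\ne 2$. Both the divergence-form identity and the integration by parts are valid in every dimension, so the conclusion holds for all $m$, and indeed for any target with non-positive curvature one already obtains $f$-harmonicity, with the Euclidean flatness used only to pass from $f$-harmonicity to constancy.
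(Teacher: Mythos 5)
Your proposal is correct and follows exactly the route the paper intends: the paper derives Corollary \ref{TE} directly from Theorem \ref{NPC} (flatness of $\r^n$ gives $f$-harmonicity), and your divergence-form argument ${\rm div}(f\,\nabla\phi^\alpha)=0$ with integration by parts supplies the standard final step that the paper leaves implicit. No gaps.
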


From our Theorem \ref{NPC} and Theorem 1.1 in \cite{HLZ}  we see  that any bi-$f$-harmonic  or $f$-biharminic map from a closed Riemannian manifold into a Riemannian manifold of non-positive sectional curvature is an $f$-harmonic map or harmonic map respectively. Both these generalize Jiang's nonexistence result which states that biharmonic maps from  closed Riemannian manifolds into Riemannian manifolds of non-positive sectional curvature are harmonic maps (see Proposition 7 in \cite{Ji}). When $M$ is complete noncompact, nonexistence results of proper biharmonic maps into Riemannian manifolds of non-positive sectional curvature were first proved in \cite{BFO}, \cite{NUG} and later generalized in \cite{Ma} and \cite{Luo1}, \cite{Luo2}. In the rest of this paper, we will give some nonexistence results of proper bi-$f$-harmonic maps and $f$-biharmonic maps which generalize the corresponding results for biharmonic maps obtained in \cite{BFO}, \cite{NUG}, \cite{Ma}, \cite{Luo1}, and \cite{Luo2}.

\section{Some nonexistence theorems for proper bi-$f$-harmonic maps}

In this section, we give some nonexistence results of proper bi-$f$-harmonic maps from complete noncompact manifolds into a non-positively curved manifold.

 \begin{theorem}\label{pro}
 Let $\phi: (M, g)\longrightarrow(N, h)$ be a bi-$f$-harmonic map from a complete  Riemannian manifold  into a Riemannian manifold of non-positive sectional curvature. Then, we have

(i) If $(\int_M\, f |d\phi|^qdv_g)^\frac{1}{q}<+\infty$ and $\int_M|\tau_f(\phi)|^pfdv_g<\infty$ for some  $ q\in[1, \infty]$ and  $p\in (1, \infty)$,
then $\phi$ is an $f$-harmonic map.

(ii) If $Vol_f(M, g):=\int_Mfdv_g=\infty$ and $\int_M|\tau_f(\phi)|^pfdv_g<\infty$ for some  $p\in (1, \infty)$,
then $\phi$ is an  $f$-harmonic map.
 \end{theorem}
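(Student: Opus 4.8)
```latex
The plan is to follow the by-now standard Bochner-technique argument for nonexistence of proper biharmonic maps (as in \cite{BFO}, \cite{Luo1}, \cite{Luo2}), adapting it to the weighted setting governed by the $f$-Laplacian. The starting point is the Weitzenb\"ock-type identity established in the proof of Theorem \ref{NPC}: since $\phi$ is bi-$f$-harmonic and $N$ has non-positive sectional curvature, the computation there yields
\begin{equation}\notag
\frac{1}{2}\Delta_f\left(|\tau_f(\phi)|^2\right)
=f\left(|\nabla^\phi\tau_f(\phi)|^2-R^N(d\phi,\tau_f(\phi),d\phi,\tau_f(\phi))\right)\geq f\,|\nabla^\phi\tau_f(\phi)|^2\geq 0.
\end{equation}
Thus $|\tau_f(\phi)|^2$ is $f$-subharmonic. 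In the compact case the maximum principle finished the argument, but on a complete noncompact manifold we instead need an $L^p$ Liouville-type theorem for $f$-subharmonic functions: if a nonnegative $f$-subharmonic function $w$ satisfies $\int_M w^{p/2}f\,dv_g<\infty$ for some $p\in(1,\infty)$, then under either of the integrability hypotheses (i) or (ii) one concludes $\Delta_f w=0$ and $\nabla^\phi\tau_f(\phi)=0$. Setting $w=|\tau_f(\phi)|^2$ (so $w^{p/2}=|\tau_f(\phi)|^p$), the finiteness assumption $\int_M|\tau_f(\phi)|^pf\,dv_g<\infty$ is exactly what feeds this Liouville theorem.

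First I would make the weighted Liouville step precise by choosing a cutoff function $\eta$ that is $1$ on a geodesic ball $B_R$ and supported in $B_{2R}$, with $|\nabla\eta|\le C/R$, and testing the $f$-subharmonicity inequality against $\eta^2 w^{p-1}$ (the Moser-type test function adapted to the exponent $p$). Integrating $\int_M \eta^2 w^{p-1}\Delta_f w\,dv_g\ge 0$ and moving the $f$-Laplacian onto the test function by the weighted divergence theorem (legitimate since $\eta$ has compact support), one obtains a Caccioppoli-type estimate bounding $\int_{B_R} w^{p-2}|\nabla w|^2 f\,dv_g$ by a term of the form $\frac{C}{R^2}\int_{B_{2R}} w^p f\,dv_g$. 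Letting $R\to\infty$ and using $\int_M w^{p/2}f\,dv_g<\infty$ forces the gradient term to vanish, so $w$ is constant; combined with the pointwise inequality this gives $\nabla^\phi\tau_f(\phi)=0$ and $|\tau_f(\phi)|\equiv c$ for some constant $c\ge 0$.

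It then remains to upgrade the constancy of $|\tau_f(\phi)|$ to $\tau_f(\phi)=0$, which is where hypotheses (i) and (ii) diverge. As in the compact proof, consider the vector field $Y=\langle f\,d\phi,\tau_f(\phi)\rangle$, whose divergence is $\di Y=|\tau_f(\phi)|^2=c$ once $\nabla^\phi\tau_f(\phi)=0$. Under (ii), if $c>0$ then $\int_{B_R}\di Y\,dv_g=c\,\mathrm{Vol}_f(B_R)\to\infty$, whereas the finiteness of $\int_M|\tau_f(\phi)|^pf\,dv_g$ together with a Karp/Yau-type estimate on the boundary flux of $Y$ forces $\int_{B_R}\di Y\,dv_g$ to stay bounded, a contradiction; hence $c=0$. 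Under (i) the integral condition $\left(\int_M f|d\phi|^q\,dv_g\right)^{1/q}<\infty$ is what is needed to control the flux $\int_{\partial B_R}\langle Y,\nu\rangle$ directly via H\"older's inequality (pairing $|d\phi|$ against $|\tau_f(\phi)|$ with exponents $q$ and its conjugate) and again conclude the flux vanishes along a suitable sequence $R_j\to\infty$, giving $c=0$.

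The main obstacle I anticipate is not the Bochner inequality itself, which is immediate from Theorem \ref{NPC}, but rather the analytic justification of the two Liouville/flux arguments on a complete noncompact manifold with the \emph{weighted} measure $f\,dv_g$. Specifically, making the cutoff integration rigorous requires care that all boundary terms from the weighted integration by parts actually decay, and in case (i) the H\"older pairing must be set up so that the exponents match the hypothesis exactly (with the $q=\infty$ endpoint handled by an $L^\infty$ bound on $f|d\phi|$). The constant function $f$ could in principle fail to be bounded away from zero, so I would keep the weight $f$ inside every integral rather than dividing by it, and only use $f>0$ pointwise. Provided these weighted cutoff and flux estimates go through, the conclusion that $\phi$ is $f$-harmonic follows in both cases.
```
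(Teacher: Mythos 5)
Your overall strategy --- a weighted Bochner inequality, a weighted $L^p$ Liouville theorem giving $|\tau_f(\phi)|\equiv c$ and $\nabla^\phi\tau_f(\phi)=0$, then a Gaffney-type flux argument with the $1$-form $\omega(X)=\langle f\,d\phi(X),\tau_f(\phi)\rangle$ --- is exactly the paper's (the paper isolates the Liouville step as Lemma \ref{main lem} and quotes Theorem 4.3 of \cite{WX} rather than reproving it). However, your Liouville step has a genuine exponent gap. You run the Caccioppoli argument on $w=|\tau_f(\phi)|^2$ with test function $\eta^2w^{p-1}$, which yields
\begin{equation}\notag
\int_{B_R}w^{p-2}|\nabla w|^2f\,dv_g\le\frac{C}{R^2}\int_{B_{2R}}w^{p}f\,dv_g,
\end{equation}
so the right-hand side is controlled by $\int_M|\tau_f(\phi)|^{2p}f\,dv_g$, not by the hypothesis $\int_M|\tau_f(\phi)|^{p}f\,dv_g<\infty$; you cannot ``let $R\to\infty$ and use $\int_Mw^{p/2}f\,dv_g<\infty$'' to kill that term. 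To match the hypothesis you must apply the argument to $u=|\tau_f(\phi)|=w^{1/2}$ with exponent $p>1$; but $u$ need not be smooth at the zeros of $\tau_f(\phi)$, and establishing its weighted subharmonicity $\Delta_{-\ln f}\,u\ge0$ requires the regularization $(|\tau_f(\phi)|^2+\epsilon)^{1/2}$ together with the Kato-type inequality $|\nabla|\tau_f(\phi)|^2|^2\le4(|\tau_f(\phi)|^2+\epsilon)|\nabla^\phi\tau_f(\phi)|^2$. This is precisely the content of the paper's Lemma \ref{main lem} and it cannot be skipped: for $p\in(1,2)$ your $w$ lies only in weighted $L^{p/2}$ with $p/2\le1$, where Yau's $L^s$ Liouville theorem is no longer available.

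Two smaller points. In case (ii) your flux computation is both off ($\di Y=|\tau_f(\phi)|^2=c^2$ is constant, so $\int_{B_R}\di Y\,dv_g=c^2\,\mathrm{Vol}(B_R)$, not $c\,\mathrm{Vol}_f(B_R)$) and unnecessary: once $|\tau_f(\phi)|\equiv c$, the hypothesis gives $c^p\,\mathrm{Vol}_f(M)=\int_M|\tau_f(\phi)|^pf\,dv_g<\infty$, which forces $c=0$ immediately since $\mathrm{Vol}_f(M)=\infty$. In case (i) you should record explicitly that $c\ne0$ together with $\int_M|\tau_f(\phi)|^pf\,dv_g<\infty$ forces $\mathrm{Vol}_f(M)<\infty$; this is what makes the H\"older pairing $\int_Mf|d\phi|\,|\tau_f(\phi)|\,dv_g\le c\,\mathrm{Vol}_f(M)^{1-1/q}\bigl(\int_Mf|d\phi|^q\,dv_g\bigr)^{1/q}$ finite, so that Yau's generalized Gaffney theorem applies and yields $0=\int_M|\tau_f(\phi)|^2dv_g$, the desired contradiction.
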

When the target manifold has strictly negative sectional curvature, we have
\begin{theorem}\label{pro2}
Let $\phi: (M, g)\longrightarrow (N, h)$ be a bi-$f$-harmonic map from a complete  Riemannian manifold  into a Riemannian manifold  of strictly negative sectional curvature with $\int_M|\tau_f(\phi)|^pfdv_g<\infty$ for some  $p\in (1, \infty)$. If there is some point $x\in M$ such that $\rank\phi(x)\geq2$, then $\phi$ is an $f$-harmonic map.
\end{theorem}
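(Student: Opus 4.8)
The plan is to combine the weighted Bochner identity already obtained in the proof of Theorem \ref{NPC} with a Caccioppoli-type cut-off argument adapted to the weighted volume $f\,dv_g$, and then to exploit the strict negativity of the curvature to force a pointwise rank bound. Writing $u=|\tau_f(\phi)|$ and noting that $\Delta_f w=f\Delta w+\langle\nabla f,\nabla w\rangle=\di(f\nabla w)$, I would first recast (\ref{GD50}) in divergence form: multiplying (\ref{GD50}) by $2f$ gives
$$\Delta_f(u^2)=2f\big(|\nabla^\phi\tau_f(\phi)|^2-R^N(d\phi,\tau_f(\phi),d\phi,\tau_f(\phi))\big)\ge0.$$
Combining the identity $\Delta_f(u^2)=2u\,\Delta_f u+2f|\nabla u|^2$ with Kato's inequality $|\nabla^\phi\tau_f(\phi)|\ge|\nabla u|$ and the sign $R^N(d\phi,\tau_f(\phi),d\phi,\tau_f(\phi))\le0$, I would deduce $u\,\Delta_f u\ge0$, so that $u=|\tau_f(\phi)|$ is $f$-subharmonic where it is positive (and weakly on all of $M$ after the standard regularization $u_\delta=\sqrt{u^2+\delta^2}$).

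The analytic core is then a weighted $L^p$-Liouville statement: a nonnegative $f$-subharmonic $u$ with $\int_M u^p f\,dv_g<\infty$ for some $p\in(1,\infty)$ on a complete manifold is constant. I would prove this directly, the weight causing no difficulty. Choosing geodesic cut-offs $\psi$ with $\psi\equiv1$ on $B(x_0,R)$, $\operatorname{supp}\psi\subset B(x_0,2R)$ and $|\nabla\psi|\le C/R$, I would test $\di(f\nabla u)\ge0$ against $\psi^2u^{p-1}$ and integrate by parts, absorbing the cross term by Young's inequality to obtain
$$(p-1-\varepsilon)\int_M f\,\psi^2u^{p-2}|\nabla u|^2\,dv_g\le \frac{C}{\varepsilon R^2}\int_M f\,u^p\,dv_g.$$
Letting $R\to\infty$, the right-hand side tends to $0$ as the tail of a convergent integral, independently of whether $f$ is bounded, so $\nabla u\equiv0$ and $u=|\tau_f(\phi)|\equiv c$ is constant. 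Since this uses only completeness of $g$ and finiteness of the weighted $L^p$ norm, neither $Vol_f(M)=\infty$ nor a bound on $|d\phi|$ is required, which is precisely what distinguishes this conclusion from Theorem \ref{pro}.

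It then remains to run the dichotomy. If $c=0$ then $\tau_f(\phi)=0$ and $\phi$ is $f$-harmonic. If $c>0$, substituting $\Delta_f u=0$ and $\nabla u=0$ back into $2u\,\Delta_f u=2f|\nabla^\phi\tau_f(\phi)|^2-2fR^N(d\phi,\tau_f(\phi),d\phi,\tau_f(\phi))-2f|\nabla u|^2$ forces both $\nabla^\phi\tau_f(\phi)=0$ and $R^N(d\phi,\tau_f(\phi),d\phi,\tau_f(\phi))=0$ everywhere. Expanding the curvature term as $\sum_i R^N(d\phi(e_i),\tau_f(\phi),d\phi(e_i),\tau_f(\phi))=0$, strict negativity of the sectional curvature forces every $d\phi(e_i)$ to be proportional to the nowhere-vanishing $\tau_f(\phi)$, whence $d\phi_x(T_xM)\subseteq\mathbb{R}\,\tau_f(\phi)(x)$ and $\rank\phi\le1$ at every point. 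This contradicts the hypothesis $\rank\phi(x)\ge2$, so $c>0$ is impossible and $\phi$ is $f$-harmonic.

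I expect the $L^p$-Liouville step to be the main obstacle, in particular the exponent range $p\in(1,2)$, where $u^{p-2}$ is singular on the zero set of $\tau_f(\phi)$; the clean remedy is to run the cut-off estimate on the regularizations $u_\delta$ and pass to the limit $\delta\to0$, which I would carry out carefully.
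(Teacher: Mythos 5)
Your proposal is correct and its overall architecture is the one the paper uses: show that $|\tau_f(\phi)|$ is subharmonic for the weighted operator (the paper writes this as $\Delta_{-\ln f}|\tau_f(\phi)|\ge 0$, which is $f^{-1}\di(f\nabla\cdot)$ applied to $|\tau_f(\phi)|$, so it is the same statement as your $\Delta_f u\ge 0$ up to the positive factor $f$), conclude from a weighted $L^p$ Liouville theorem relative to the measure $f\,dv_g$ that $|\tau_f(\phi)|\equiv c$ and $\nabla^\phi\tau_f(\phi)=0$, and then, when $c>0$, feed this back into the Bochner identity to get $\sum_i\langle R^N(\tau_f(\phi),d\phi(e_i))d\phi(e_i),\tau_f(\phi)\rangle=0$, which under strictly negative sectional curvature forces every $d\phi(e_i)$ to be parallel to the nowhere-vanishing $\tau_f(\phi)$ and hence $\rank\phi\le 1$ everywhere, contradicting the hypothesis. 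The one genuine divergence is the Liouville step: the paper (in Lemma \ref{main lem}) simply cites Theorem 4.3 of Wang--Xu \cite{WX} for the $L^p$ Liouville theorem on smooth metric measure spaces, whereas you prove it from scratch by testing $\di(f\nabla u)\ge 0$ against $\psi^2u^{p-1}$ with geodesic cut-offs and absorbing the cross term; your estimate is sound (the factor $C/(\varepsilon R^2)$ times the fixed finite integral $\int_M u^pf\,dv_g$ already tends to $0$), and your flagged regularization $u_\delta=\sqrt{u^2+\delta^2}$ for the range $p\in(1,2)$ is exactly the right repair and mirrors the $\epsilon$-regularization the paper itself performs to establish subharmonicity. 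What the citation buys the paper is brevity; what your self-contained Caccioppoli argument buys is independence from \cite{WX} and a transparent reason why only completeness and $\int_M|\tau_f(\phi)|^pf\,dv_g<\infty$ are needed, with no volume or energy hypothesis as in Theorem \ref{pro}.
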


To prove these theorems, we will need the following lemma.
\begin{lemma}\label{main lem}
Let $\phi: (M, g)\longrightarrow (N, h)$ be a bi-$f$-harmonic map from a complete noncompact Riemannian manifold  into a Riemannian manifold  of non-positive sectional curvature. If $\int_M|\tau_f(\phi)|^pfdv_g<+\infty$ for some $p>1$, then $\nabla^\phi\tau_f(\phi)=0.$
\end{lemma}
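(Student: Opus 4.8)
The plan is to adapt the compact-case argument from Theorem \ref{NPC} to the complete noncompact setting by replacing the global maximum principle and Stokes's theorem with a cut-off (Karp/Yau-type) integration argument. The starting point is the same Bochner-type formula established in the proof of Theorem \ref{NPC}: using the bi-$f$-harmonic equation (\ref{2F}) one obtains
\begin{equation}\notag
\frac{1}{2}\Delta|\tau_f(\phi)|^2+\frac{1}{2}\langle\nabla|\tau_f(\phi)|^2,\nabla\ln f\rangle
=|\nabla^\phi\tau_f(\phi)|^2-R^N(d\phi,\tau_f(\phi),d\phi,\tau_f(\phi))\geq 0,
\end{equation}
where nonnegativity uses the non-positivity of the sectional curvature of $N$. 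Rewriting the left-hand side with the $f$-Laplacian (\ref{fDelta}) gives $\Delta_f\bigl(|\tau_f(\phi)|^2\bigr)\geq 2|\nabla^\phi\tau_f(\phi)|^2\geq 0$, so $w:=|\tau_f(\phi)|^2$ is $f$-subharmonic, and it suffices to upgrade this subharmonicity into the vanishing $\nabla^\phi\tau_f(\phi)=0$ using only the integrability hypothesis $\int_M|\tau_f(\phi)|^pf\,dv_g<\infty$.

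Next I would run a cut-off computation. Let $\eta$ be a standard Lipschitz cut-off equal to $1$ on the geodesic ball $B_R$, supported in $B_{2R}$, with $|\nabla\eta|\leq C/R$. Set $u=|\tau_f(\phi)|$, so that $w=u^2$. Testing the differential inequality $\Delta_f(u^2)\geq 2|\nabla^\phi\tau_f(\phi)|^2$ against $\eta^2 u^{p-2}$ (with respect to the weighted measure $f\,dv_g$, for which $\Delta_f$ is the natural self-adjoint operator, since $\int_M(\Delta_f\varphi)\psi\,dv_g=-\int_M\langle\nabla\varphi,\nabla\psi\rangle f\,dv_g$ by (\ref{fDelta})) and integrating by parts, I expect to arrive at an inequality of the schematic form
\begin{equation}\notag
\int_M\eta^2 u^{p-2}|\nabla^\phi\tau_f(\phi)|^2 f\,dv_g
\leq C\int_M|\nabla\eta|^2 u^{p}f\,dv_g
\leq \frac{C}{R^2}\int_{B_{2R}}u^{p}f\,dv_g,
\end{equation}
after absorbing the Kato-type term $|\nabla u|\leq|\nabla^\phi\tau_f(\phi)|$ and the lower-order gradient terms via Young's inequality. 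The hypothesis $\int_M u^pf\,dv_g<\infty$ makes the right-hand side tend to $0$ as $R\to\infty$, forcing $u^{p-2}|\nabla^\phi\tau_f(\phi)|^2\equiv 0$, and hence $\nabla^\phi\tau_f(\phi)=0$ on the open set where $\tau_f(\phi)\neq 0$; on the complementary closed set $\tau_f(\phi)=0$ already gives $|\tau_f(\phi)|$ locally constant, and a standard connectedness/continuity argument then yields $\nabla^\phi\tau_f(\phi)=0$ everywhere.

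The main obstacle I anticipate is the bookkeeping in the test-function estimate, specifically handling the weight $f$ and the lower-order drift term $\frac{1}{2}\langle\nabla w,\nabla\ln f\rangle$ cleanly: one must organize the integration by parts so that the $f$-weight is carried uniformly and so that no assumption on the sign or size of $\nabla f$ leaks in beyond what is already encoded in $\Delta_f$. Writing the inequality as $\int(\Delta_f w)\,\eta^2 u^{p-2}f\,dv_g$ and integrating by parts in the weighted measure sidesteps the need to estimate $\nabla f$ separately, which is why phrasing everything through $\Delta_f$ and the measure $f\,dv_g$ is the right framework. A secondary technical point is the degeneracy of $u^{p-2}$ when $p<2$ at zeros of $\tau_f(\phi)$; this is handled by the usual device of replacing $u$ by $(u^2+\varepsilon)^{1/2}$, running the estimate, and letting $\varepsilon\to 0$.
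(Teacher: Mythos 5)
Your proposal is correct in substance but takes a genuinely different route from the paper. The paper first converts the Bochner inequality into drift-subharmonicity of the norm itself, $\Delta_{-\ln f}|\tau_f(\phi)|\ge 0$ (via the $(|\tau_f(\phi)|^2+\epsilon)^{1/2}$ regularization), and then invokes the weighted $L^p$ Liouville theorem of Wang--Xu (Theorem 4.3 in \cite{WX}) as a black box to conclude that $|\tau_f(\phi)|$ is constant, after which $\nabla^\phi\tau_f(\phi)=0$ is read off from the Bochner identity a posteriori. You instead keep the good term $|\nabla^\phi\tau_f(\phi)|^2$ in the divergence-form inequality for $|\tau_f(\phi)|^2$ and run the Caccioppoli/cut-off estimate inline; this is essentially a self-contained re-proof of the Wang--Xu theorem specialized to the present situation. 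What it buys is a proof that does not cite \cite{WX} and that delivers $\nabla^\phi\tau_f(\phi)=0$ directly, without passing through constancy of $|\tau_f(\phi)|$; the cost is that you must carry out the test-function bookkeeping yourself. I checked that the estimate does close: testing against $\eta^2u^{p-2}$ (with $u=|\tau_f(\phi)|$), using $|\nabla u|\le|\nabla^\phi\tau_f(\phi)|$ and Cauchy--Schwarz, one absorbs the cross term and obtains $\int\eta^2u^{p-2}|\nabla^\phi\tau_f(\phi)|^2f\,dv_g\le c(p)\int u^p|\nabla\eta|^2f\,dv_g$ with a constant $c(p)$ that blows up as $p\to1$ --- which is exactly where the hypothesis $p>1$ enters.

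Three points need tightening. First, the correct intermediate inequality is $\Delta_f\bigl(|\tau_f(\phi)|^2\bigr)\ge 2f\,|\nabla^\phi\tau_f(\phi)|^2$, with the weight $f$ on the right: since $\Delta_f=f\Delta+\langle\nabla f,\nabla\cdot\rangle=\di(f\nabla\cdot)$ carries an overall factor of $f$ relative to the drift Laplacian, the unweighted bound you state is only valid when $f\ge1$; your final display has the weight in the right place, so this is self-correcting but should be fixed. Second, the order of limits in the $\epsilon$-regularization matters: for fixed $\epsilon>0$ the right-hand side $R^{-2}\int_{B_{2R}}(u^2+\epsilon)^{p/2}f\,dv_g$ contains a term of size $\epsilon^{p/2}R^{-2}\mathrm{Vol}_f(B_{2R})$, which need not vanish as $R\to\infty$ when $\mathrm{Vol}_f(M)=\infty$; you must send $\epsilon\to0$ first at fixed $R$ (dominated convergence on the compact ball, plus Fatou on the left) and only then let $R\to\infty$. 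Third, the endgame on the zero set: the argument gives $\nabla^\phi\tau_f(\phi)=0$ on the open set $\{\tau_f(\phi)\ne0\}$ and trivially on the interior of $\{\tau_f(\phi)=0\}$; since the union of these two open sets is dense in $M$, continuity of $\nabla^\phi\tau_f(\phi)$ finishes the proof. The remark that ``$|\tau_f(\phi)|$ is locally constant on the zero set'' is not the relevant observation there.
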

\proof For a real number $\epsilon>0$, a straightforward computation shows that
\begin{eqnarray}\label{ine1}
&&\Delta(|\tau_f\phi)|^2+\epsilon)^\frac{1}{2}\nonumber
\\&=&(|\tau_f(\phi)|^2+\epsilon)^{-\frac{3}{2}}\left(\frac{1}{2}(|\tau_f(\phi)|^2+\epsilon)\Delta|\tau_f(\phi)|^2-\frac{1}{4}|\nabla|\tau_f(\phi)|^2|^2\right).
\end{eqnarray}
Since $\nabla|\tau_f(\phi)|^2=2h(\nabla^\phi\tau_f(\phi),\tau_f(\phi))$, we have
$$|\nabla|\tau_f(\phi)|^2|^2\leq 4(|\tau_f(\phi)|^2+\epsilon)|\nabla^\phi\tau_f(\phi)|^2,$$
from which we obtain
\begin{eqnarray}\label{GD29}
&&\frac{1}{2}(|\tau_f(\phi)|^2+\epsilon)\Delta|\tau_f(\phi)|^2-\frac{1}{4}|\nabla|\tau_f(\phi)|^2|^2 \nonumber
\\&\geq& \frac{1}{2}(|\tau_f(\phi)|^2+\epsilon)\left(\Delta|\tau_f(\phi)|^2-2|\nabla^\phi\tau_f(\phi)|^2\right).
\end{eqnarray}
Since $\phi$ is bi-$f$-harmonic, we use (\ref{2F}) to have
\begin{eqnarray}\label{ine3}
&&\frac{1}{2}\Delta|\tau_f(\phi)|^2\nonumber
\\&=&|\nabla^\phi\tau_f(\phi)|^2+\langle\Delta^\phi\tau_f(\phi), \tau_f(\phi)\rangle\nonumber
\\&=&|\nabla^\phi\tau_f(\phi)|^2-Tr_gR^N(\tau_f(\phi),d\phi,\tau_f(\phi),d\phi)-f^{-1}\langle\nabla_{\nabla f}^\phi\tau_f(\phi), \tau_f(\phi)\rangle\nonumber
\\&\geq& |\nabla^\phi\tau_f(\phi)|^2-\frac{1}{2}\langle\nabla|\tau_f(\phi)|^2,\nabla\ln f\rangle,
\end{eqnarray}
where the inequality was obtained by using the assumption that $R^N\leq 0$. Rewriting (\ref{ine3}) as
\begin{eqnarray}\label{GD30}
\Delta|\tau_f(\phi)|^2-2|\nabla^\phi\tau_f(\phi)|^2\ge -\langle\nabla|\tau_f(\phi)|^2,\nabla\ln f\rangle
\end{eqnarray}
and substituting (\ref{GD30}) into the right hand side of  (\ref{GD29}) we have
\begin{eqnarray}\label{ine2}
&&\frac{1}{2}(|\tau_f(\phi)|^2+\epsilon)\Delta|\tau_f(\phi)|^2-\frac{1}{4}|\nabla|\tau_f(\phi)|^2|^2\nonumber
\\&\geq&-\frac{1}{2}(|\tau_f(\phi)|^2+\epsilon)\langle\nabla|\tau_f(\phi)|^2,\nabla\ln f\rangle.
\end{eqnarray}
Using (\ref{ine1}) and (\ref{ine2})  we deduce that
$$\Delta(|\tau_f(\phi)|^2+\epsilon)^\frac{1}{2}\geq-\frac{1}{2}(|\tau_f(\phi)|^2+\epsilon)^{-\frac{1}{2}}\langle\nabla|\tau_f(\phi)|^2,\nabla\ln f\rangle.$$
Now taking the limit on both sides of the above inequality as $\epsilon\to 0$, we have that
\begin{equation}\label{GD31}
\Delta|\tau_f(\phi)|+\langle\nabla|\tau_f(\phi)|,\nabla\ln f\rangle\geq 0.
\end{equation}
This, by using the notation  $\Delta_{\alpha}:=\Delta-\langle\nabla \alpha,\nabla\cdot\rangle$, where $\alpha(x)$ is a  function on $M$ as in  \cite{WX}, can be written as
\begin{eqnarray}
\Delta_{-\ln f}|\tau_f(\phi)|\geq0,
\end{eqnarray}
which means that $|\tau_f(\phi)|$ is a positive $(-\ln f)$-subharmonic function on $M$. By Theorem 4.3 in \cite{WX}, we see that if $\int_M|\tau_f(\phi)|^pfdv_g<\infty$ for some $p>1$, then $|\tau_f(\phi)|$ is a constant on $M$. Moreover from (\ref{ine3}) we have $\nabla^\phi\tau_f(\phi)=0$. This completes the proof of the lemma. \endproof

\textbf{Proof of Theorem \ref{pro}.} From the above lemma we conclude that $|\tau_f(\phi)|=c$ is a constant. It follows that if $Vol_f(M)=\infty$, then we must have $c=0$, this proves (ii) of Theorem \ref{pro}. To prove (i) of Theorem \ref{pro}, we consider two cases. If $c=0$, then we are done. If $c\ne 0$, then the hypothesis implies that $Vol_f(M)<\infty$ and we will derive a contradiction as follows. Define a l-form on $M$ by
$$\omega(X):=\langle fd\phi(X),\tau_f(\phi)\rangle,~(X\in TM).$$
Then we have
\begin{eqnarray*}
\int_M|\omega|dv_g&=&\int_M(\sum_{i=1}^m|\omega(e_i)|^2)^\frac{1}{2}dv_g
\\&\leq&\int_Mf|\tau_f(\phi)||d\phi|dv_g
\\&\leq&c Vol_f(M)^{1-\frac{1}{q}}(\int_Mf|d\phi|^qdv_g)^\frac{1}{q}
\\&<&\infty ,
\end{eqnarray*}
where if $q=\infty$, we denote $\|d\phi\|_{L^\infty(M)}=(\int_Mf|d\phi|^qdv_g)^\frac{1}{q}$.

Now, we compute $-\delta\omega=\sum_{i=1}^m(\nabla_{e_i}\omega)(e_i)$ to have
\begin{eqnarray}\notag
-\delta\omega&=&\sum_{i=1}^m\nabla_{e_i}(\omega(e_i))-\omega(\nabla_{e_i}e_i)
\\\notag&=&\sum_{i=1}^m\{\langle\nabla^\phi_{e_i}(fd\phi(e_i)),\tau_f(\phi)\rangle
-\langle fd\phi(\nabla_{e_i}e_i),\tau_f(\phi)\rangle\}
\\\notag&=&\sum_{i=1}^m\langle f\nabla^\phi_{e_i}d\phi(e_i)-fd\phi(\nabla_{e_i}e_i)+\langle\nabla_{e_i} f,\nabla_{e_i}\phi\rangle,\tau_f(\phi)\rangle
\\\label{GD34}&=&|\tau_f(\phi)|^2,
\end{eqnarray}
where in obtaining the second equality we have used $\nabla^\phi\tau_f(\phi)=0$.  Now by Yau's generalized Gaffney's theorem (see Appendix) and (\ref{GD34}), we have
$$0=-\int_M\delta\omega dv_g=\int_M|\tau_f(\phi)|^2dv_g=c^2Vol(M),$$
which implies that $c=0$, a contradiction. Therefore we must have $c=0$, i.e. $\phi$ is an $f$-harmonic map. This completes the proof of theorem.
\endproof

\textbf{Proof of Theorem \ref{pro2}.} By Lemma \ref{main lem}, we know that $|\tau_f(\phi)|=c$,  a constant. We only need to prove that $c=0$. Assume that $c\neq 0$, we will derive a contradiction. It follows from (\ref{2F}) that at $x \in M$, we have
\begin{eqnarray*}
0&=&-\frac{1}{2}\Delta|\tau_f(\phi)|^2
\\&=&-\langle\Delta^\phi\tau_f(\phi), \tau_f(\phi)\rangle-|\nabla^\phi\tau_f(\phi)|^2
\\&=&\sum_{i=1}^m\langle R^N(\tau_f(\phi), d\phi(e_i))d\phi(e_i), \tau_f(\phi)\rangle+f^{-1}\langle\nabla_{\nabla f}^\phi\tau_f(\phi),\tau_f(\phi)\rangle-|\nabla^\phi\tau_f(\phi)|^2
\\&=&\sum_{i=1}^m\langle R^N(\tau_f(\phi), d\phi(e_i))d\phi(e_i), \tau_f(\phi)\rangle,
\end{eqnarray*}
where in obtaining the first and fourth equalities we have used Lemma \ref{main lem}. Since the sectional curvature of $N$ is strictly negative, we conclude that $d\phi(e_i)$ with $i=1, 2, \cdots, m$ are parallel to $\tau_f(\phi)$ at any  $x \in M$ and hence $\rank\phi(x)\leq1$.  This contradicts the assumption that $\rank\phi(x)\ge 2$ for some $x$. The contradiction shows that we must have $c=0$. Thus, we obtain the theorem.
\endproof

From Theorem \ref{pro}, we obtain the following corollary.
\begin{corollary}\label{main thm}
 Let $\phi: (M, g)\longrightarrow (N, h)$ be a bi-$f$-harmonic map with a bounded $f$ from a complete  Riemannian manifold into a Riemannian manifold  of non-positive sectional curvature. If \\
(i) $\int_Mf|d\phi|^2dv_g<+\infty$ and  $\int_M|\tau_f(\phi)|^2dv_g<\infty,$\\ or\\
(ii) $Vol_f(M, g):=\int_Mfdv_g=\infty$ and  $\int_M|\tau_f(\phi)|^2dv_g<\infty,$\\
then $\phi$ is $f$-harmonic.
\end{corollary}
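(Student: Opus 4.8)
The plan is to obtain this corollary as a direct specialization of Theorem~\ref{pro}, taking the exponents to be $p=q=2$ and using the boundedness of $f$ to pass between the weighted and unweighted $L^2$-norms of the $f$-tension field. The only content beyond Theorem~\ref{pro} is the observation that the hypotheses here are phrased in terms of the \emph{unweighted} integral $\int_M|\tau_f(\phi)|^2\,dv_g$, whereas Theorem~\ref{pro} requires the $f$-\emph{weighted} integral $\int_M|\tau_f(\phi)|^p f\,dv_g$; these two are comparable precisely because $f$ is assumed bounded.

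First I would fix a constant $C>0$ with $0<f\le C$ on $M$, which exists since $f$ is a bounded positive function. Then for any map satisfying $\int_M|\tau_f(\phi)|^2\,dv_g<\infty$ one has
\[
\int_M|\tau_f(\phi)|^2 f\,dv_g\le C\int_M|\tau_f(\phi)|^2\,dv_g<\infty,
\]
so the $p=2$ weighted finiteness condition of Theorem~\ref{pro} is automatically satisfied under the hypotheses of either (i) or (ii).

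For part~(i) I would apply Theorem~\ref{pro}(i) with $q=p=2$: the assumption $\int_M f|d\phi|^2\,dv_g<\infty$ is exactly the $q=2$ form of the first integral condition (since the integral is nonnegative, its $q$-th root is finite iff the integral is), and the weighted tension-field condition holds by the displayed estimate; Theorem~\ref{pro}(i) then gives that $\phi$ is $f$-harmonic. For part~(ii) I would apply Theorem~\ref{pro}(ii) with $p=2$: the hypothesis $Vol_f(M,g)=\infty$ is assumed outright, and the weighted condition again follows from the boundedness of $f$, so Theorem~\ref{pro}(ii) yields $f$-harmonicity.

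There is essentially no obstacle here: the corollary is a routine specialization, and the single point requiring care is the bookkeeping of exponents together with the elementary use of $f\le C$ to compare $\int_M|\tau_f(\phi)|^2 f\,dv_g$ with $\int_M|\tau_f(\phi)|^2\,dv_g$. I would emphasize that the boundedness of $f$ enters only in this comparison step and is used nowhere else.
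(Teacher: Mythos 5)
Your proposal is correct and is exactly the argument the paper intends: the corollary is stated as an immediate consequence of Theorem~\ref{pro}, obtained by taking $p=q=2$ and using $f\le C$ to bound $\int_M|\tau_f(\phi)|^2f\,dv_g$ by $C\int_M|\tau_f(\phi)|^2\,dv_g$. The bookkeeping of hypotheses is right in both parts, so nothing further is needed.
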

Similarly, from Theorem \ref{pro2} we have the following corollary which shows that a proper bi-$f$-harmonic map with bounded $f$ and bounded bi-$f$-energy from a complete noncompact manifold into a negatively curved manifold must have $\rank \le 1$ at any point.
\begin{corollary}\label{main thm2}
Let $\phi: (M, g)\longrightarrow (N, h)$ be a bi-$f$-harmonic map with bounded $f$ from a complete  Riemannian manifold into a Riemannian manifold of strictly negative sectional curvature.  If $\int_M|\tau_f(\phi)|^2dv_g<\infty$ and there is a point $x\in M$ such that $\rank\phi(x)\geq 2$, then $\phi$ is an $f$-harmonic map.
\end{corollary}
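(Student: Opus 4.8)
The plan is to obtain this corollary as an immediate specialization of Theorem \ref{pro2} to the exponent $p=2$. Both statements concern a bi-$f$-harmonic map $\phi:(M,g)\to(N,h)$ from a complete manifold into a manifold of strictly negative sectional curvature, and both conclude $f$-harmonicity under a rank-$\ge 2$ hypothesis at some point. The only discrepancy is in the finiteness condition: Theorem \ref{pro2} requires the weighted bound $\int_M|\tau_f(\phi)|^pf\,dv_g<\infty$ for some $p\in(1,\infty)$, whereas the present corollary assumes instead that $f$ is bounded and that the unweighted integral $\int_M|\tau_f(\phi)|^2\,dv_g$ is finite. So the single thing I would need to verify is that these latter two hypotheses imply the weighted one.

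This verification is a one-line estimate. Recall that $f$ is assumed positive throughout the paper, so the boundedness hypothesis gives a constant $C>0$ with $0<f\le C$ on $M$. Taking $p=2\in(1,\infty)$, I would then estimate
\begin{equation}\notag
\int_M|\tau_f(\phi)|^2\,f\,dv_g\;\le\;C\int_M|\tau_f(\phi)|^2\,dv_g\;<\;\infty,
\end{equation}
which is exactly the integrability assumption appearing in Theorem \ref{pro2} with this choice of $p$. The remaining hypotheses---completeness of $(M,g)$, strict negativity of the sectional curvature of $(N,h)$, and the existence of a point $x$ with $\rank\phi(x)\ge 2$---are carried over verbatim from the corollary's statement into that of Theorem \ref{pro2}. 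Hence all hypotheses of Theorem \ref{pro2} are met, and I would invoke it to conclude that $\phi$ is an $f$-harmonic map.

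I do not expect any genuine obstacle here: all of the analytic substance lives in Theorem \ref{pro2} and, beneath it, in Lemma \ref{main lem}, where the weighted subharmonicity inequality $\Delta_{-\ln f}|\tau_f(\phi)|\ge 0$ together with the $L^p$-Liouville theorem of Wang--Xin forces $\nabla^\phi\tau_f(\phi)=0$, after which the strict negativity of the curvature pins down the rank. The corollary merely repackages the integrability condition into the frequently more convenient pair ``$f$ bounded and finite bi-$f$-energy,'' and the proof is therefore purely a matter of checking that this pair is subsumed by the weighted hypothesis of the theorem.
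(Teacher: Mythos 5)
Your proposal is correct and matches the paper exactly: the authors also derive this corollary directly from Theorem \ref{pro2}, and the only content is the observation that $0<f\le C$ together with $\int_M|\tau_f(\phi)|^2\,dv_g<\infty$ gives $\int_M|\tau_f(\phi)|^2 f\,dv_g\le C\int_M|\tau_f(\phi)|^2\,dv_g<\infty$, i.e.\ the hypothesis of the theorem with $p=2$. Nothing further is needed.
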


\section{Some nonexistence theorems for proper $f$-biharmonic maps}
In this section we give some nonexistence theorems for proper $f$-biharmonic maps from a complete manifold into a non-positively curved manifold.  Such a study was started in \cite{Ou2} where it was proved that there exists no proper $f$-biharmonic map with constant $f$-bienergy density from a compact Riemannian manifold into a nonpositively curved manifold. Later in \cite{HLZ}, it was shown that the condition of having constant $f$-bienergy density can be dropped.  When the domain manifold is complete noncompact, the authors in \cite{HLZ} also proved the following result.
\begin{theorem}[HLZ]\label{He's thm}
Let $\phi:(M,g)\longrightarrow (N,h)$ be an $f$-biharmonic map from a complete  Riemannian manifold $(M,g)$ into a Riemannian manifold $(N,h)$ with non-positive curvature. If

(I) $ \int_M|d\phi|^2dv_g<\infty, \; \int_M|\tau(\phi)|^2dv_g<\infty, \;and \; \int_Mf^p|\tau(\phi)|^pdv_g<\infty $\\
for some $p\geq2$, or\\
(II) $Vol(M,g)=\infty$ and $\int_Mf^p|\tau(\phi)|^pdv_g<\infty$, \\
then $\phi$ is harmonic.
\end{theorem}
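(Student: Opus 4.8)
The plan is to reduce Theorem \ref{He's thm} to the classical biharmonic nonexistence arguments by passing from the tension field to the rescaled field $\sigma:=f\,\tau(\phi)$. The decisive first step is purely algebraic. Expanding the rough Laplacian by the Leibniz rule gives $\Delta^\phi(f\tau(\phi))=f\,\Delta^\phi\tau(\phi)+(\Delta f)\tau(\phi)+2\nabla^\phi_{{\rm grad}\,f}\tau(\phi)$, where $\Delta^\phi={\rm Tr}_g(\nabla^\phi\nabla^\phi-\nabla^\phi_{\nabla^M})$. Writing the bitension field (\ref{BTF}) as $\tau_2(\phi)=\Delta^\phi\tau(\phi)-{\rm Tr}_g R^N(d\phi,\tau(\phi))d\phi$ and inserting this into the $f$-biharmonic equation (\ref{F2}), the three terms carrying $f$, $\Delta f$ and ${\rm grad}\,f$ cancel exactly, leaving the clean identity $\Delta^\phi\sigma={\rm Tr}_g R^N(d\phi,\sigma)d\phi$. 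Thus $\sigma=f\tau(\phi)$ satisfies precisely the equation obeyed by the tension field of an ordinary biharmonic map, and the crucial integrability hypothesis becomes $\int_M|\sigma|^p\,dv_g=\int_M f^p|\tau(\phi)|^p\,dv_g<\infty$.

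With this in hand I would run the Bochner technique on $\sigma$, now with \emph{no} weight (in contrast to Lemma \ref{main lem}). From the identity above and $R^N\le 0$ one gets $\frac{1}{2}\Delta|\sigma|^2=|\nabla^\phi\sigma|^2+\langle{\rm Tr}_g R^N(d\phi,\sigma)d\phi,\sigma\rangle\ge|\nabla^\phi\sigma|^2\ge 0$. Regularizing $(|\sigma|^2+\epsilon)^{1/2}$ and using the Kato-type bound $|\nabla|\sigma|^2|^2\le 4(|\sigma|^2+\epsilon)|\nabla^\phi\sigma|^2$ as in (\ref{ine1})--(\ref{ine2}), then letting $\epsilon\to 0$, shows that $|\sigma|$ is a genuine nonnegative subharmonic function. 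Since $|\sigma|\in L^p(M)$ with $p\ge 2>1$, Yau's $L^p$-Liouville theorem for subharmonic functions forces $|\sigma|\equiv c$ for a constant $c\ge0$, and then equality in the Bochner estimate gives $\nabla^\phi\sigma=0$.

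It remains to show $c=0$ in both cases. Under hypothesis (II), $\int_M|\sigma|^p\,dv_g=c^p\,{\rm Vol}(M,g)$ with ${\rm Vol}(M,g)=\infty$ forces $c=0$, hence $f\tau(\phi)=0$ and, since $f>0$, $\tau(\phi)=0$. Under hypothesis (I), suppose $c\ne0$; then $c^p\,{\rm Vol}(M,g)=\int_M|\sigma|^p\,dv_g<\infty$ forces ${\rm Vol}(M,g)<\infty$, and I would argue exactly as in the proof of Theorem \ref{pro}: put $\omega(X):=\langle d\phi(X),\sigma\rangle$, so that $\nabla^\phi\sigma=0$ yields $-\delta\omega=\langle\tau(\phi),\sigma\rangle=f|\tau(\phi)|^2$. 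The bound $|\omega|\le c\,|d\phi|$ together with $\int_M|d\phi|^2\,dv_g<\infty$ and ${\rm Vol}(M,g)<\infty$ gives $\omega\in L^1$, while $\int_M f|\tau(\phi)|^2\,dv_g=c\int_M|\tau(\phi)|\,dv_g\le c\,(\int_M|\tau(\phi)|^2\,dv_g)^{1/2}{\rm Vol}(M,g)^{1/2}<\infty$ gives $\delta\omega\in L^1$. Yau's generalized Gaffney theorem then gives $\int_M f|\tau(\phi)|^2\,dv_g=-\int_M\delta\omega\,dv_g=0$, whence $\tau(\phi)\equiv0$, contradicting $c\ne0$. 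Therefore $c=0$ and $\phi$ is harmonic.

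The main obstacle, and the real content of the argument, is the first step: recognizing that the natural object is the rescaled field $\sigma=f\tau(\phi)$, and verifying the Leibniz computation that collapses the $f$-biharmonic equation (\ref{F2}) into $\Delta^\phi\sigma={\rm Tr}_g R^N(d\phi,\sigma)d\phi$. Once this reduction is made, everything becomes a transcription of the biharmonic case with $\sigma$ in place of $\tau(\phi)$; the only points needing care are the borderline exponent $p=2$, handled by applying the Liouville theorem to $|\sigma|\in L^p$ rather than to $|\sigma|^2\in L^{p/2}$, and the verification of the two $L^1$ conditions in the Gaffney step, which both follow once one observes that a nonzero constant value of $|\sigma|$ together with $\int_M|\sigma|^p\,dv_g<\infty$ forces finite volume.
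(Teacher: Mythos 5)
Your proposal is correct and follows essentially the same route as the paper: the paper treats this statement as a special case of its Theorem \ref{pro3}, whose (sketched) proof likewise rewrites the $f$-biharmonic equation as $\Delta^\phi(f\tau(\phi))={\rm Tr}_g R^N(d\phi,f\tau(\phi))d\phi$, shows $f|\tau(\phi)|$ is subharmonic via the $\epsilon$-regularization and Kato-type inequality, applies Yau's $L^p$ Liouville theorem to get $f|\tau(\phi)|=c$ and $\nabla^\phi(f\tau(\phi))=0$, and concludes with the same $1$-form $\omega$ and Yau--Gaffney argument. Your write-up simply supplies the details that the paper leaves as an outline.
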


Our next theorem gives a generalization of Theorem \ref{He's thm}.
 \begin{theorem}\label{pro3}
An  $f$-biharmonic map  $\phi: (M, g)\longrightarrow (N, h)$  from a complete  Riemannian manifold  into a Riemannian manifold  of non-positive curvature is a harmonic map if either

(i) $(\int_M|d\phi|^qdv_g)^\frac{1}{q}<+\infty$ and $\int_Mf^p|\tau(\phi)|^pdv_g<\infty$ for some  $ q\in [1, \infty]$ and $p\in (1, \infty)$,
or
\\(ii) $Vol(M, g):=\int_Mdv_g=\infty$ and $\int_Mf^p|\tau(\phi)|^pdv_g<\infty$ for some  $ q\in [1, \infty]$ and $p\in (1, \infty)$.
 \end{theorem}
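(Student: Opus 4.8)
The plan is to mirror the scheme of Lemma \ref{main lem} and Theorem \ref{pro}, but with the section $\sigma:=f\,\tau(\phi)$ playing the role that the $f$-tension field $\tau_f(\phi)$ played there. (The conformal correspondence of Theorem \ref{MT1} is not convenient here: transporting the $f$-biharmonic map to a bi-$f^{(m-2)/m}$-harmonic map for $\bar g=f^{-2/m}g$ rescales the integrands by powers of $f$ such as $f^{-2/m}$, so the hypotheses of Theorem \ref{pro} are not reproduced, and a direct argument is preferable.) The crux is an algebraic reformulation of the $f$-biharmonic equation (\ref{F2}). Writing $\Delta^\phi={\rm Tr}_g(\nabla^\phi\nabla^\phi-\nabla^\phi_{\nabla^M})$ for the rough Laplacian and using the Leibniz rule $\Delta^\phi(f\,\tau(\phi))=(\Delta f)\tau(\phi)+2\nabla^\phi_{{\rm grad}\,f}\tau(\phi)+f\Delta^\phi\tau(\phi)$ together with $\tau_2(\phi)=\Delta^\phi\tau(\phi)-{\rm Tr}_g R^N({\rm d}\phi,\tau(\phi)){\rm d}\phi$, equation (\ref{F2}) collapses to
\[
\Delta^\phi\sigma={\rm Tr}_g R^N({\rm d}\phi,\sigma){\rm d}\phi,\qquad \sigma=f\,\tau(\phi),
\]
that is, $J^\phi(\sigma)=0$: for an $f$-biharmonic map the weighted tension field $f\,\tau(\phi)$ is a Jacobi field along $\phi$. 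This is the structural analogue of the bi-$f$-harmonic identity, and unlike Theorem \ref{MT1} it requires no restriction on $m$, the whole argument taking place in the fixed metric $g$.

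Next I would establish the analogue of Lemma \ref{main lem}, namely $\nabla^\phi\sigma=0$. The Bochner formula gives
\[
\frac{1}{2}\Delta|\sigma|^2=|\nabla^\phi\sigma|^2+\langle\Delta^\phi\sigma,\sigma\rangle=|\nabla^\phi\sigma|^2+\sum_i\langle R^N({\rm d}\phi(e_i),\sigma){\rm d}\phi(e_i),\sigma\rangle\ge|\nabla^\phi\sigma|^2\ge0,
\]
the curvature term being non-negative exactly as in the proof of Theorem \ref{NPC}, since $N$ has non-positive sectional curvature. Hence $|\sigma|=f|\tau(\phi)|$ is subharmonic. After the $\varepsilon$-regularization $(|\sigma|^2+\varepsilon)^{1/2}$ used in Lemma \ref{main lem} to cope with the zero set of $\sigma$, the hypothesis $\int_M f^p|\tau(\phi)|^p\,dv_g=\int_M|\sigma|^p\,dv_g<\infty$ with $p>1$ lets me invoke the $L^p$-Liouville theorem (Theorem 4.3 in \cite{WX}, here in the unweighted case $\alpha=0$) to conclude that $|\sigma|\equiv c$ is constant, whence $\nabla^\phi\sigma=0$ from the displayed inequality.

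With $|\sigma|\equiv c$ the two cases separate at once. For (ii), $\int_M|\sigma|^p\,dv_g=c^p\,{\rm Vol}(M,g)$ is finite while ${\rm Vol}(M,g)=\infty$, forcing $c=0$; since $f>0$ this gives $\tau(\phi)\equiv0$, i.e. $\phi$ is harmonic. For (i), suppose $c\ne0$; then $c^p\,{\rm Vol}(M,g)=\int_M f^p|\tau(\phi)|^p\,dv_g<\infty$ yields ${\rm Vol}(M,g)<\infty$. I would then set $\omega(X):=\langle {\rm d}\phi(X),\sigma\rangle$, estimate by Hölder
\[
\int_M|\omega|\,dv_g\le c\int_M|{\rm d}\phi|\,dv_g\le c\,{\rm Vol}(M,g)^{1-\frac{1}{q}}\Big(\int_M|{\rm d}\phi|^q\,dv_g\Big)^{\frac{1}{q}}<\infty,
\]
and compute, using $\nabla^\phi\sigma=0$, that $-\delta\omega=\langle\tau(\phi),\sigma\rangle=f|\tau(\phi)|^2\ge0$. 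Yau's generalized Gaffney theorem (Appendix) then gives $\int_M f|\tau(\phi)|^2\,dv_g=0$, so $\tau(\phi)\equiv0$, contradicting $c\ne0$; hence $c=0$ and $\phi$ is harmonic.

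The Leibniz expansion and the Bochner computation are routine; the care goes into two places. One is the $\varepsilon$-regularization that makes $|\sigma|$ genuinely subharmonic across its zeros before the $L^p$-Liouville theorem can be applied. The other, which I expect to be the main obstacle just as in Theorem \ref{pro}, is matching the integrability hypotheses to the precise statement of the Gaffney-Yau theorem: because $-\delta\omega=f|\tau(\phi)|^2$ has constant sign one needs only $\omega\in L^1$, and then $\int_M(-\delta\omega)\,dv_g=0$ follows by testing against an exhaustion of Lipschitz cut-offs and letting the gradient terms vanish through $\omega\in L^1$. It is exactly the finiteness of ${\rm Vol}(M,g)$ together with the $L^q$-bound on ${\rm d}\phi$ that secures $\omega\in L^1$ in case (i), which is why the two hypotheses appear in the stated form.
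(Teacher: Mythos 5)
Your proposal is correct and follows essentially the same route as the paper: the paper likewise rewrites the $f$-biharmonic equation as $\Delta^\phi(f\tau(\phi))={\rm Tr}_g R^N(d\phi,f\tau(\phi))d\phi$ (its equation (\ref{F2'})), applies the $\epsilon$-regularized Bochner argument and an $L^p$ Liouville theorem to get $f|\tau(\phi)|\equiv c$ and $\nabla^\phi(f\tau(\phi))=0$, and then disposes of case (i) with the same $1$-form $\omega(X)=\langle d\phi(X),f\tau(\phi)\rangle$ via Yau's generalized Gaffney theorem. The only cosmetic difference is that you invoke Theorem 4.3 of \cite{WX} with weight $\alpha=0$ where the paper cites Yau's classical $L^p$ Liouville theorem directly; these are the same tool.
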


 \begin{remark}
 Clearly, our Theorem \ref{pro3} improves Theorem \ref{He's thm} since (I) and (II) in Theorem \ref{He's thm} (in which $p \ge 2$ and finite bienergy are required) implies (i) and (ii) in Theorem \ref{pro3} respectively, but not conversely.
 \end{remark}

When the target manifold has strictly negative sectional curvature, we have
\begin{theorem}\label{pro4}
Let $\phi: (M, g)\longrightarrow (N, h)$ be an f-biharmonic map from a complete  Riemannian manifold  into a Riemannian manifold  of strictly negative curvature with $\int_Mf^p|\tau(\phi)|^pdv_g<\infty$ for some $p\in (1,\infty)$. Assume that there is a point $x\in M$ such that $\rank\phi(x)\geq2$, then $\phi$ is a harmonic map.
\end{theorem}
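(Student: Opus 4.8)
The plan is to mirror the proof of Theorem \ref{pro2}, with the scalar $|\tau_f(\phi)|$ used there replaced by $f\,|\tau(\phi)|$, which is the quantity natural to the $f$-biharmonic problem (it is exactly the integrand in the hypothesis $\int_M f^p|\tau(\phi)|^p\,dv_g<\infty$). Concretely, I would first establish the $f$-biharmonic analogue of Lemma \ref{main lem}, namely that under the stated curvature and integrability assumptions $f\,\tau(\phi)$ is a parallel section of $\phi^*TN$, so that $c:=f\,|\tau(\phi)|$ is a nonnegative constant; and then I would rule out $c\neq 0$ using the strict negativity of $R^N$ and the rank hypothesis.

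For the first step I would exploit Theorem \ref{MT1}. By that theorem $\phi$ is a bi-$\tilde f$-harmonic map with respect to $\bar g=f^{-2/m}g$, where $\tilde f=f^{(m-2)/m}$; applying (\ref{fTF}) with base metric $\bar g$ and weight $\tilde f$ (for which the associated conformal metric is $\tilde f^{2/(m-2)}\bar g=g$ and $\tilde f^{m/(m-2)}=f$) gives $\tau_{\tilde f}(\phi,\bar g)=f\,\tau(\phi,g)$, hence $|\tau_{\tilde f}(\phi,\bar g)|=f\,|\tau(\phi)|$, the norm on $\phi^*TN$ being computed with $h$ and thus insensitive to the domain metric, and the pullback connection $\nabla^\phi$ being unchanged by the conformal change. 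The pointwise Bochner computation in the proof of Lemma \ref{main lem} — which uses only the bi-$\tilde f$-harmonic equation and $R^N\le 0$, and neither completeness nor the $L^p$ bound — then yields, in the $\bar g$-world, $\Delta^{\bar g}_{-\ln\tilde f}\,|\tau_{\tilde f}(\phi,\bar g)|\ge 0$. The key observation is the conformal identity
\[
\Delta^{\bar g}_{-\ln\tilde f}\,u=f^{2/m}\,\Delta_g u,
\]
valid for every function $u$, in which the Bakry--Émery weight $-\ln\tilde f$ exactly cancels the first-order term produced by the conformal change of the Laplacian. Hence $f\,|\tau(\phi)|$ is genuinely subharmonic on the original complete manifold $(M,g)$. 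Since $\int_M\big(f\,|\tau(\phi)|\big)^p\,dv_g<\infty$, the $L^p$-Liouville theorem for subharmonic functions on complete manifolds (Theorem 4.3 in \cite{WX} with trivial weight, i.e. Yau's theorem) forces $f\,|\tau(\phi)|\equiv c$; feeding this constancy back into the $\bar g$-Bochner identity (as in the last line of the proof of Lemma \ref{main lem}) gives $\nabla^\phi(f\,\tau(\phi))=0$.

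For the second step, suppose $c\neq 0$; then $\tau(\phi)\neq 0$ everywhere since $f>0$. Parallelism of $f\,\tau(\phi)$ means $\nabla^\phi_X\tau(\phi)=-(X\ln f)\,\tau(\phi)$, and substituting this into the $f$-biharmonic equation (\ref{F2}) — after expanding $\Delta^\phi\tau(\phi)$ and using $\Delta\ln f=\Delta f/f-|\nabla\ln f|^2$ — makes every term involving derivatives of $f$ cancel, leaving $\mathrm{Tr}_g R^N(d\phi,\tau(\phi))d\phi=0$ at each point. Exactly as in the proof of Theorem \ref{pro2}, pairing with $\tau(\phi)$ and invoking the strict negativity of the sectional curvature of $N$ forces each $d\phi(e_i)$ to be proportional to $\tau(\phi)$, so $\rank\phi\le 1$ everywhere, contradicting $\rank\phi(x)\ge 2$. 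Therefore $c=0$, i.e. $\tau(\phi)\equiv 0$ and $\phi$ is harmonic.

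I expect the main obstacle to be the first step: recognizing $f\,|\tau(\phi)|$ rather than $|\tau(\phi)|$ as the correct subharmonic quantity, and verifying that its subharmonicity holds on $(M,g)$ with precisely the unweighted measure $dv_g$ that the hypothesis supplies. The conformal reduction makes this transparent and simultaneously skirts two pitfalls of a naive black-box reduction to Theorem \ref{pro2}: that $\bar g=f^{-2/m}g$ need not be complete, and that the weighted $L^p$ norm $\int_M|\tau_{\tilde f}(\phi,\bar g)|^p\tilde f\,dv_{\bar g}=\int_M f^{\,p-2/m}|\tau(\phi)|^p\,dv_g$ in the $\bar g$-world does \emph{not} coincide with the hypothesis $\int_M f^p|\tau(\phi)|^p\,dv_g<\infty$. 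Both issues disappear because only the pointwise inequality $\Delta_g\big(f\,|\tau(\phi)|\big)\ge 0$ is transported, while the single global (Liouville) step is carried out directly on the complete manifold $(M,g)$. As an alternative to the conformal derivation, this same inequality can be obtained by a direct Bochner computation on $(M,g)$, regularizing $(|\tau(\phi)|^2+\epsilon)^{1/2}$ as in the proof of Lemma \ref{main lem}.
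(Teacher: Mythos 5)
Your proposal is correct, and its second half (the parallelism-plus-strict-negativity rank argument) coincides with the paper's. Where you genuinely differ is in how the key pointwise inequality $\Delta_g\bigl(f|\tau(\phi)|\bigr)\ge 0$ is obtained. The paper rewrites the $f$-biharmonic equation as $\Delta^\phi(f\tau(\phi))-\mathrm{Trace}_g R^N(d\phi,f\tau(\phi))d\phi=0$ (equation (\ref{F2'})) and runs the regularized Bochner computation of Lemma \ref{main lem} directly on $(M,g)$ with the quantity $(f^2|\tau(\phi)|^2+\epsilon)^{1/2}$; you instead invoke Theorem \ref{MT1} to view $\phi$ as bi-$\tilde f$-harmonic for $\bar g=f^{-2/m}g$, $\tilde f=f^{(m-2)/m}$, transport Lemma \ref{main lem} to the $\bar g$-world, and then use the identity $\Delta^{\bar g}_{-\ln\tilde f}u=f^{2/m}\Delta_g u$ (which I checked and is correct) to land back on plain subharmonicity of $f|\tau(\phi)|$ on the original complete metric. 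Your route buys a conceptual explanation of why $f|\tau(\phi)|$, paired with the unweighted measure $dv_g$ and unweighted Laplacian, is the right object --- it is the norm of the $\tilde f$-tension field, and the drift term $\langle\nabla\ln\tilde f,\nabla\cdot\rangle$ is exactly absorbed by the conformal factor --- and you correctly identify why a black-box reduction to Theorem \ref{pro2} would fail (possible incompleteness of $\bar g$ and mismatch of the weighted $L^p$ norms), doing the single global Liouville step on $(M,g)$ itself. Two small caveats. First, the conformal route presupposes $m\neq 2$, since Theorem \ref{MT1} excludes dimension $2$ while the rank hypothesis does not; for full generality you therefore need the direct computation you mention in your last sentence (which is exactly what the paper does). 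Second, in the final step it is quicker to observe that $\nabla^\phi(f\tau(\phi))=0$ kills $\Delta^\phi(f\tau(\phi))$ in (\ref{F2'}), yielding $\mathrm{Trace}_g R^N(d\phi,f\tau(\phi))d\phi=0$ at once, rather than re-expanding (\ref{F2}) and verifying the cancellation of the $f$-derivative terms by hand.
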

The proofs of Theorems \ref{pro3} and \ref{pro4} are very similar to those of Theorems \ref{pro} and \ref{pro2}. The main ideas and outlines are given as follows.

\textbf{Proof of Theorem \ref{pro3}:} First, we rewrite the $f$-biharmonic map equation (\ref{F2}) as
\begin{eqnarray}\label{F2'}
\Delta^\phi(f\tau(\phi))-{\rm Trace}_{g} R^{N}({\rm d}\phi, f\tau(\phi)){\rm d}\phi=0,
\end{eqnarray}
 and use it to compute $\Delta(f^2|\tau(u)|^2+\epsilon)^\frac{1}{2}$ for a constant $\epsilon>0$.
Then, we use the result and an argument similar to that used in the proof of Lemma \ref{main lem} to have
$$\Delta(f^2|\tau(u)|^2+\epsilon)^\frac{1}{2}\geq0.$$
By taking limit on both sides of the above inequality as $\epsilon\to 0$, we have $\Delta(f|\tau(\phi)|)\geq0$. Now, by Yau's classical $L^p$ Liouville type theorem (see \cite{Yau}) and the assumption that $\int_Mf^p|\tau(\phi)|^pdv_g<\infty$, we conclude that $f|\tau(\phi)|=c$, a constant, which is used to deduce that $\nabla^\phi(f\tau(\phi))=0$.\\

It is easy to see that $f|\tau(\phi)|=c$ together with hypothesis (ii) implies that $c=0$, which mean $\tau(\phi)=0$ and hence $\phi$ is a harmonic map. To prove the theorem under hypothesis (i), we assume $c\neq0$ and define a field of  l-form $\omega$  on $M$ by  $$\omega(X):=\langle d\phi(X),f\tau(\phi)\rangle,~(X\in TM).$$
Then a straightforward computation shows that $-\delta\omega=f|\tau(\phi)|^2$ which, together with the assumption (i) of Theorem \ref{pro3}, implies that
$$\int_M|\omega|dv_g<\infty.$$
Using Yau's generalized Gaffney's theorem, we have $$0=\int_M\delta\omega dv_g=-\int_Mf|\tau(\phi)|^2dv_g=-c^2\int_Mf^{-1}dv_g,$$ which implies that $c=0$, a contradiction. The contradiction shows that we must have $c=0$,  and hence $\phi$ is a harmonic map. Thus, we obtain the theorem.

\textbf{Proof of Theorem \ref{pro4}:}  As in the first part of the proof of Theorem \ref{pro3}, we have $f|\tau(\phi)|=c$, a constant.  It is enough to prove that $c=0$. Assume that $c\neq 0$, we will derive a contradiction. Using  (\ref{F2'}) we have at $x\in M$:
\begin{eqnarray*}
0&=&-\frac{1}{2}\Delta (|f\tau(\phi)|)^2
\\&=&-\langle\Delta^\phi (f\tau(\phi)), f\tau(\phi)\rangle-|\nabla^\phi(f\tau(\phi)|^2
\\&=&\sum_{i=1}^m\langle R^N(f\tau(\phi), d\phi(e_i))d\phi(e_i), f\tau(\phi)\rangle-|\nabla^\phi f\tau(\phi)|^2
\\&=&\sum_{i=1}^m\langle R^N(f\tau(\phi), d\phi(e_i))d\phi(e_i), f\tau(\phi)\rangle.
\end{eqnarray*}
Since the sectional curvature of $N$ is strictly negative, we must have that $d\phi(e_i)$ with $i=1,2,\cdots, m$ are parallel to $f\tau(\phi)$ at any $x\in M$, so $\rank\phi(x)\leq1$. This contradicts the assumption that $\rank\phi(x)\geq2$ for some $x\in M$. The contradiction shows that $c=0$, and hence $\phi$ is a harmonic map. This completes the proof of Theorem \ref{pro4}.
\endproof

From Theorems \ref{pro3} and \ref{pro4} we have the following corollaries.
\begin{corollary}\label{main thm3}
Any $f$-biharmonic map $\phi:(M,g)\longrightarrow (N,h)$ with a bounded $f$  from a complete Riemannian manifold $(M,g)$ into a Riemannian manifold $(N,h)$ with non-positive curvature is harmonic if either
\\(i) $\int_M|d\phi|^2dv_g<\infty$ and $\int_Mf|\tau(\phi)|^2dv_g<\infty,$ \\or\\
(ii) $Vol(M,g)=\infty$ and $\int_Mf|\tau(\phi)|^2dv_g<\infty.$
\end{corollary}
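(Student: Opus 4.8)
The plan is to derive the corollary as a direct specialization of Theorem \ref{pro3}, taking the exponents $q=2$ and $p=2$. The only new ingredient beyond Theorem \ref{pro3} is the boundedness of $f$, which I would use to trade the $f$-weighted $L^2$ bound on $\tau(\phi)$ for the mixed bound $\int_M f^p|\tau(\phi)|^p\,dv_g<\infty$ appearing in the hypotheses of that theorem.

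First I would set $C:=\sup_M f<\infty$ and record the pointwise inequality $f^2|\tau(\phi)|^2 = f\,\bigl(f|\tau(\phi)|^2\bigr)\le C\,f|\tau(\phi)|^2$. Integrating over $M$ gives $\int_M f^2|\tau(\phi)|^2\,dv_g\le C\int_M f|\tau(\phi)|^2\,dv_g$, so in both cases (i) and (ii) the assumed finiteness of $\int_M f|\tau(\phi)|^2\,dv_g$ yields $\int_M f^2|\tau(\phi)|^2\,dv_g<\infty$, which is exactly the condition $\int_M f^p|\tau(\phi)|^p\,dv_g<\infty$ of Theorem \ref{pro3} with the admissible exponent $p=2\in(1,\infty)$.

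Under hypothesis (i), the assumption $\int_M|d\phi|^2\,dv_g<\infty$ furnishes $\bigl(\int_M|d\phi|^q\,dv_g\bigr)^{1/q}<\infty$ with $q=2$, so both conditions in Theorem \ref{pro3}(i) are met and $\phi$ is harmonic. Under hypothesis (ii), the assumption $Vol(M,g)=\infty$ is precisely the volume hypothesis of Theorem \ref{pro3}(ii), which together with the $L^2$ tension bound just established forces $\phi$ to be harmonic. There is no genuine obstacle here: the analytic content lives entirely in Theorem \ref{pro3}, and the corollary is its bounded-$f$, $p=q=2$ specialization; the single point worth verifying is the displayed estimate converting the $f$-weighted energy bound into the required $f^p$-weighted tension bound.
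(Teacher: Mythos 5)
Your proposal is correct and is essentially the paper's own argument: the paper simply derives this corollary as a specialization of Theorem \ref{pro3}, and your choice $p=q=2$ together with the estimate $\int_M f^2|\tau(\phi)|^2\,dv_g\le (\sup_M f)\int_M f|\tau(\phi)|^2\,dv_g$ is exactly the intended reduction. Nothing further is needed.
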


\begin{corollary}\label{main thm4}
Any $f$-biharmonic map $\phi:(M,g)\longrightarrow (N,h)$ with a bounded $f$ and bounded $f$-bienergy from a complete Riemannian manifold into a Riemannian manifold  with strictly negative curvature is harmonic if there exists a point $x\in M$ such that $\rank(\phi)(x)\geq2$.
\end{corollary}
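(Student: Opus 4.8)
The plan is to obtain Corollary \ref{main thm4} as an immediate consequence of Theorem \ref{pro4}, so the only work is to check that the hypotheses of the corollary imply those of the theorem. The strictly-negative-curvature assumption on $N$ and the rank condition (the existence of a point $x\in M$ with $\rank\phi(x)\ge 2$) appear verbatim in both statements, so these require nothing. The single point to verify is the integrability hypothesis $\int_M f^p|\tau(\phi)|^p\,dv_g<\infty$ for some $p\in(1,\infty)$ demanded by Theorem \ref{pro4}.

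I would establish this with the choice $p=2$. Since $f$ is bounded, there is a constant $C>0$ with $f\le C$ on $M$, whence the pointwise estimate $f^2|\tau(\phi)|^2=f\cdot\bigl(f|\tau(\phi)|^2\bigr)\le C\,f|\tau(\phi)|^2$. Integrating over $M$ and recalling that the $f$-bienergy $E_{f,2}(\phi)=\tfrac12\int_M f|\tau(\phi)|^2\,dv_g$ is assumed bounded, I obtain $\int_M f^2|\tau(\phi)|^2\,dv_g\le C\int_M f|\tau(\phi)|^2\,dv_g=2C\,E_{f,2}(\phi)<\infty$. Thus the integrability hypothesis of Theorem \ref{pro4} holds with $p=2\in(1,\infty)$.

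With all hypotheses of Theorem \ref{pro4} verified, that theorem yields directly that $\phi$ is harmonic, completing the proof. There is no genuine obstacle in this argument: the role of the boundedness of $f$ is merely to upgrade the finiteness of the $f$-bienergy $\int_M f|\tau(\phi)|^2\,dv_g$ (the geometrically natural quantity) into the $L^p$-type bound $\int_M f^2|\tau(\phi)|^2\,dv_g<\infty$ that Theorem \ref{pro4} is phrased to use. The only point worth noting is the bookkeeping that $f^p|\tau(\phi)|^p$ with $p=2$ equals exactly $f\cdot f|\tau(\phi)|^2$, which is precisely where the boundedness of $f$ enters to absorb the extra factor of $f$.
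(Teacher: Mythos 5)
Your deduction is correct and is exactly the paper's (implicit) argument: the paper derives Corollary \ref{main thm4} directly from Theorem \ref{pro4}, with the boundedness of $f$ used precisely as you do to pass from finite $f$-bienergy $\int_M f|\tau(\phi)|^2\,dv_g<\infty$ to the hypothesis $\int_M f^p|\tau(\phi)|^p\,dv_g<\infty$ with $p=2$. Nothing is missing.
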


To  close this section, we give the following example which shows  that there does exist proper $f$-biharmonic map from a complete manifold into a non-positively curved manifold.
\begin{example}
It was proved in \cite{Ou1} (Page 141, Example 3) that the map $\phi: (\Bbb{R}^2, g=e^\frac{y}{R}(dx+dy^2))\longrightarrow\Bbb{R}^3, \phi(x,y)=(R\cos\frac{x}{R}, R\sin\frac{x}{R}, y)$ is a proper biharmonic conformal immersion of $\Bbb{R}^2$ into Euclidean space $\Bbb{R}^3$. Then by Theorem 2.3 of \cite{Ou2} we see that $\phi$ is a proper $f$-biharmonic map from a complete manifold $(\Bbb{R}^2, g=dx^2+dy^2)$ into Euclidean space $\Bbb{R}^3$ (a non-positively curved space) with $f=e^{-\frac{y}{R}}$.
\end{example}
Direct computations show that $d\phi=(-\sin\frac{x}{R}, \cos\frac{x}{R}, 0)dx+(0,0,1)dy$ and
$\tau(\phi)=-(\frac{1}{R}\cos\frac{x}{R}, \frac{1}{R}\sin\frac{x}{R}, 0)$.
Therefore, one can easily check that in this example we have
\begin{eqnarray}\notag
||d\phi||_{L^\infty(\Bbb{R}^2)}=\sqrt{2}<\infty, 
\\ \notag\ ||d\phi||_{L^q(\Bbb{R}^2)}=\sqrt{2}{\rm Vol} (\mathbb{R}^2)^\frac{1}{q}=\infty \ {\rm for\;}\;  q\in [1, \infty),
\\ \notag \;\;{\rm Vol} (\mathbb{R}^2)=\infty,\\\notag  \int_{\Bbb{R}^2}f^p|\tau(\phi)|^pdv_g=\infty(p>1), \;{\rm and}\; \int_{\Bbb{R}^2}f|\tau(\phi)|^2dv_g=\infty.
\end{eqnarray}
So, the example shows that  the hypothesis ``$\int_Mf^p|\tau(\phi)|^pdv_g<\infty$"  in Theorem \ref{pro3} and the hypothesis `` $f$ is bounded, $\int_M f|\tau(\phi)|^2dv_g<\infty$" in Corollary \ref{main thm3} cannot be dropped. Note also that $\rank(\phi)(x)=2$ for any $x\in \Bbb{R}^2$ and hence the hypothesis ``$\int_Mf^p|\tau(\phi)|^pdv_g<\infty$  and  the target manifold has strictly negative sectional curvature"  in Theorem \ref{pro4} and the hypothesis ``$f$ is bounded, $\int_M f|\tau(\phi)|^2dv_g<\infty$ and  the target manifold has strictly negative sectional curvature" in Corollary \ref{main thm4} can not de dropped.

\section{Further nonexistence results on proper $f$-biharmonic and bi-$f$-harmonic maps}
It would be interesting to know how and to what extent the nature of the function $f$ would affect the existence of proper $f$-biharmonic or bi-$f$-harmonic maps from  a complete manifold into a non-positively curved manifold. In particular, one would like to know whether the boundedness assumption on the function $f$ is essential in Corollaries \ref{main thm}, \ref{main thm2},  \ref{main thm3},  \ref{main thm4}. In this section, we will show that we can  weaken the boundedness assumption on $f$ by replacing it with
\begin{eqnarray}\label{ass1}
\sup_{B_r}f(x)=o(r^2), \ as\ r\to \infty,
\end{eqnarray}
or
\begin{eqnarray}\label{ass1'}
\sup_{B_r}f(x)\leq Cr^2F(r),
\end{eqnarray}
where $B_r$ is a geodesic ball of radius $r$ centered at some  point on $M$ and $F(r)$ is a nondecreasing function such that $\int_a^\infty\frac{1}{rF(r)}dr=\infty$ for some positive constant $a>0$. Here, we illustrate how to generalize Corollary \ref{main thm} by a weaker assumption. Recall that in the proof of Lemma \ref{main lem}, we have $\Delta_{-\ln f}|\tau_f(\phi)|\geq0$, then by Theorem 4.3 of \cite{WX}, we conclude that if
\begin{eqnarray}\label{ass2}
\overline{\lim}_{r\to\infty}\frac{1}{r^2F(r)}\int_{B_r}|\tau_f(\phi)|^2fdv_g<\infty,
 \end{eqnarray}
 then $|\tau_f(\phi)|$ is constant and furthermore $\nabla^\phi\tau_f(\phi)=0$. It is easy to see that under the assumption of  (\ref{ass1'}) and $\int_M|\tau_f(\phi)|^2dv_g<\infty$, we have
 \begin{eqnarray*}
\overline{\lim}_{r\to\infty}\frac{1}{r^2F(r)} \int_{B_r}f|\tau_f(\phi)|^2dv_g\leq\lim_{r\to\infty}\frac{\sup_{B_r}f(x)}{r^2F(r)}\int_M|\tau_f(\phi)|^2dv_g<\infty,
 \end{eqnarray*}
 which implies (\ref{ass2}). Therefore, $|\tau_f(\phi)|$ is a constant $c$ and furthermore $\nabla^\phi\tau_f(\phi)=0$. Thus, if $Vol_f(M)=\infty$, we must have $c=0$, i.e. $\phi$ is an $f$-harmonic map. This shows that under the hypothesis (ii) of Corollary \ref{main thm} and assumption (\ref{ass1'}), $\phi$ is an $f$-harmonic map. To see that Corollary \ref{main thm} holds under hypothesis (i) of Corollary \ref{main thm} and the assumption (\ref{ass1}),  we only need to prove that $c=0$ in this case. If otherwise, we see that $Vol(M)<\infty$ and we will derive a contradiction as follows. Define a l-form on $M$ by
$$\omega(X):=\langle fd\phi(X),\tau_f(\phi)\rangle,~(X\in TM).$$
Then we have
\begin{eqnarray*}
\underline{\lim}_{r\to\infty}\frac{1}{r}\int_{B_r}|\omega|dv_g&=&\underline{\lim}_{r\to\infty}\frac{1}{r}\int_{B_r}(\sum_{i=1}^m|\omega(e_i)|^2)^\frac{1}{2}dv_g
\\&\leq&\underline{\lim}_{r\to\infty}\frac{1}{r}\int_{B_r}f|\tau_f(\phi)||d\phi|dv_g
\\&\leq&c\underline{\lim}_{r\to\infty}\frac{1}{r}(\int_{B_r}fdv_g)^\frac{1}{2} (\int_Mf|d\phi|^2dv_g)^\frac{1}{2}
\\&\leq&\underline{\lim}_{r\to\infty}\frac{\sup_{B_r}\sqrt{f(x)}}{r}Vol(M)^\frac{1}{2}(\int_Mf|d\phi|^2dv_g)^\frac{1}{2}
\\&=&0.
\end{eqnarray*}
On the other hand, we compute $-\delta\omega=\sum_{i=1}^m(\nabla_{e_i}\omega)(e_i)$ to have
\begin{eqnarray*}
-\delta\omega&=&\sum_{i=1}^m\nabla_{e_i}(\omega(e_i))-\omega(\nabla_{e_i}e_i)
\\&=&\sum_{i=1}^m\{\langle\nabla^\phi_{e_i}(fd\phi(e_i)),\tau_f(\phi)\rangle
-\langle fd\phi(\nabla_{e_i}e_i),\tau_f(\phi)\rangle\}
\\&=&\sum_{i=1}^m\langle f\nabla^\phi_{e_i}d\phi(e_i)-fd\phi(\nabla_{e_i}e_i)+\langle\nabla_{e_i} f,\nabla_{e_i}\phi\rangle,\tau_f(\phi)\rangle
\\&=&|\tau_f(\phi)|^2,
\end{eqnarray*}
where in obtaining the second equality we have used $\nabla^\phi\tau_f(\phi)=0$.  Now by Yau's generalized Gaffney's theorem(see Appendix) and the above equality we have that
$$0=-\int_M\delta\omega dv_g=\int_M|\tau_f(\phi)|^2dv_g=c^2Vol(M),$$
which implies that $c=0$, a contradiction. Therefore we must have $c=0$, i.e. $\phi$ is an $f$-harmonic map.

Summarizing the above discussion, we have the following theorem which gives a generalization of Corollary \ref{main thm}.
\begin{theorem}\label{main thm'}
 Let $\phi: (M, g)\longrightarrow (N, h)$ be a bi-$f$-harmonic map from a complete Riemannian manifold $(M, g)$ into a Riemannian manifold $(N, h)$ of non-positive sectional curvature. If
 \\(i)
 \begin{eqnarray}
\lim_{r\to\infty}\frac{\sup_{B_r}f(x)}{r^2}=0,
 \end{eqnarray}
 where $B_r$ is a geodesic ball of radius $r$ around some point on $M$, and $$\int_Mf|d\phi|^2dv_g<+\infty \ and \ \int_M|\tau_f(\phi)|^2dv_g<\infty;$$
\\or
\\(ii)  \begin{eqnarray}
\sup_{B_r}f(x)\leq C F(r)r^2
 \end{eqnarray}
 for a nondecreasing function $F(r)$ such that $\int_a^\infty\frac{1}{rF(r)}dr=\infty$ for some positive constant $a>0$, and $$Vol_f(M, g):=\int_Mfdv_g=\infty\ and\ \int_M|\tau_f(\phi)|^2dv_g<\infty,$$
then $\phi$ is $f$-harmonic.
\end{theorem}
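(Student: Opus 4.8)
The plan is to reproduce the two-step scheme behind Corollary \ref{main thm}: first show that $|\tau_f(\phi)|$ is constant with $\nabla^\phi\tau_f(\phi)=0$, then force that constant to vanish. The only role played by boundedness of $f$ in that corollary was to license two analytic tools---a Liouville theorem for subharmonic-type functions and Yau's generalized Gaffney divergence theorem---both of which in fact tolerate controlled growth. So I would locate precisely where $f$ enters and substitute the hypotheses (\ref{ass1}) or (\ref{ass1'}) there.

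For the first step I would reuse the computation of Lemma \ref{main lem} verbatim: the bi-$f$-harmonic equation (\ref{2F}) together with $R^N\le0$ yields $\Delta_{-\ln f}|\tau_f(\phi)|\ge0$, so that $|\tau_f(\phi)|$ is a $(-\ln f)$-subharmonic function. Instead of the global $L^p$ hypothesis, I would feed this into the ball-growth form of Theorem 4.3 of \cite{WX}, whose hypothesis is (\ref{ass2}). Under (\ref{ass1'}) and $\int_M|\tau_f(\phi)|^2dv_g<\infty$ this is verified by
\[
\frac{1}{r^2F(r)}\int_{B_r}f|\tau_f(\phi)|^2dv_g\le\frac{\sup_{B_r}f}{r^2F(r)}\int_M|\tau_f(\phi)|^2dv_g\le C\int_M|\tau_f(\phi)|^2dv_g,
\]
while under (\ref{ass1}) it holds with $F\equiv1$ because $\sup_{B_r}f/r^2\to0$. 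In either case the Liouville theorem gives $|\tau_f(\phi)|\equiv c$, and then (\ref{ine3}) together with $\nabla|\tau_f(\phi)|^2=0$ forces $\nabla^\phi\tau_f(\phi)=0$.

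For the second step, case (ii) is the short one: once $|\tau_f(\phi)|\equiv c$, the condition $Vol_f(M)=\infty$ excludes $c\ne0$ as in Theorem \ref{pro}(ii), so $\phi$ is $f$-harmonic. In case (i) I would argue by contradiction. If $c\ne0$, then $\int_M|\tau_f(\phi)|^2dv_g=c^2 Vol(M)<\infty$ forces $Vol(M)<\infty$. Introducing the $1$-form $\omega(X)=\langle f\,d\phi(X),\tau_f(\phi)\rangle$ and using $\nabla^\phi\tau_f(\phi)=0$, a divergence computation gives $-\delta\omega=|\tau_f(\phi)|^2$. The growth (\ref{ass1}) yields $\sup_{B_r}\sqrt f/r\to0$, so Cauchy--Schwarz and the finite weighted energy $\int_Mf|d\phi|^2dv_g<\infty$ give
\[
\underline{\lim}_{r\to\infty}\frac1r\int_{B_r}|\omega|\,dv_g\le\underline{\lim}_{r\to\infty}\frac{\sup_{B_r}\sqrt f}{r}\,Vol(M)^{1/2}\Big(\int_Mf|d\phi|^2dv_g\Big)^{1/2}=0.
\]
This decay is exactly what Yau's generalized Gaffney theorem needs, so $0=\int_M(-\delta\omega)\,dv_g=c^2 Vol(M)$, forcing $c=0$, a contradiction.

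I expect the delicate part to be step one: one must check that the exponents in (\ref{ass1}) and (\ref{ass1'}) are precisely tuned so that the weighted integral over $B_r$ stays bounded and (\ref{ass2}) is met, and that the weighted Liouville theorem of \cite{WX} really applies to the $(-\ln f)$-weight attached to $|\tau_f(\phi)|$. By comparison the divergence argument in case (i) is routine once one notices that the $o(r^2)$ growth of $f$ is exactly the threshold making the boundary term in Karp's form of Gaffney's theorem vanish---so the whole point is that quadratic-in-$r$ growth is the sharp substitute for boundedness of $f$.
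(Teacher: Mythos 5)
Your proposal is correct and follows essentially the same route as the paper: derive $\Delta_{-\ln f}|\tau_f(\phi)|\ge 0$ as in Lemma \ref{main lem}, invoke the ball-growth form of Theorem 4.3 of \cite{WX} after checking (\ref{ass2}) from the growth hypotheses on $f$, and then dispose of the constant $c$ via $Vol_f(M)=\infty$ in case (ii) and via the $1$-form $\omega$, the Cauchy--Schwarz estimate $\underline{\lim}_{r\to\infty}\frac{1}{r}\int_{B_r}|\omega|\,dv_g=0$, and Yau's generalized Gaffney theorem in case (i). This matches the paper's argument step for step, including the observation that (\ref{ass1}) is exactly the threshold making the Gaffney boundary term vanish.
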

Similar arguments apply to obtain  the following theorems, which give generalizations of the corresponding results in Corollaries \ref{main thm2}, \ref{main thm3} and \ref{main thm4}.
\begin{theorem}\label{main thm2'}
Let $\phi: (M, g)\longrightarrow (N, h)$ be a bi-$f$-harmonic map from a complete Riemannian manifold $(M, g)$ into a Riemannian manifold $(N, h)$ of strictly negative sectional curvature.  Assume that
 \begin{eqnarray}
\sup_{B_r}f(x)\leq C F(r)r^2,
 \end{eqnarray}
 where $B_r$ is a geodesic ball of radius $r$ around some point on $M$ and $F(r)$ is a nondecreasing function such that $\int_a^\infty\frac{1}{rF(r)}dr=\infty$ for some positive constant $a>0$.  Then if $$\int_M|\tau_f(\phi)|^2dv_g<\infty$$ and there is some point $x\in M$ such that $\rank\phi(x)\geq2$, $\phi$ is an $f$-harmonic map.
\end{theorem}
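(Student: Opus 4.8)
The plan is to follow the template established in the proof of Theorem \ref{pro2}, but to feed the weaker growth hypothesis on $f$ into the sharper weighted Liouville-type theorem of \cite{WX} in place of the integrability assumption $\int_M|\tau_f(\phi)|^pf\,dv_g<\infty$. First I would observe that the computation carried out in the proof of Lemma \ref{main lem} is purely local and uses no assumption on $f$ beyond smoothness and positivity; in particular it produces the inequality $\Delta_{-\ln f}|\tau_f(\phi)|\ge 0$, so that $|\tau_f(\phi)|$ is a nonnegative $(-\ln f)$-subharmonic function on $M$.

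Next I would verify the integral growth condition (\ref{ass2}) needed to apply Theorem 4.3 of \cite{WX}. Using the hypothesis $\sup_{B_r}f(x)\le C F(r)r^2$ together with $\int_M|\tau_f(\phi)|^2\,dv_g<\infty$, one estimates
\[
\frac{1}{r^2F(r)}\int_{B_r}f|\tau_f(\phi)|^2\,dv_g
\le\frac{\sup_{B_r}f(x)}{r^2F(r)}\int_M|\tau_f(\phi)|^2\,dv_g
\le C\int_M|\tau_f(\phi)|^2\,dv_g<\infty,
\]
which gives $\overline{\lim}_{r\to\infty}\frac{1}{r^2F(r)}\int_{B_r}f|\tau_f(\phi)|^2\,dv_g<\infty$. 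Since $F$ is nondecreasing with $\int_a^\infty\frac{dr}{rF(r)}=\infty$, Theorem 4.3 of \cite{WX} then forces $|\tau_f(\phi)|$ to be a constant $c$ on $M$; moreover, tracing back through (\ref{ine3}) in the proof of Lemma \ref{main lem} yields $\nabla^\phi\tau_f(\phi)=0$.

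It remains to show $c=0$, and here I would reproduce the pointwise curvature argument from the proof of Theorem \ref{pro2}, which never invokes boundedness of $f$. Assuming $c\ne 0$ and using $\nabla^\phi\tau_f(\phi)=0$ in the bi-$f$-harmonic equation (\ref{2F}), one obtains at every $x\in M$
\[
0=-\frac{1}{2}\Delta|\tau_f(\phi)|^2=\sum_{i=1}^m\langle R^N(\tau_f(\phi),d\phi(e_i))d\phi(e_i),\tau_f(\phi)\rangle.
\]
Strict negativity of the sectional curvature of $N$ makes each summand nonpositive, so the vanishing of the sum forces every $d\phi(e_i)$ to be parallel to $\tau_f(\phi)$; hence $\rank\phi(x)\le 1$ at each point, contradicting the hypothesis $\rank\phi(x)\ge 2$ at some point. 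Therefore $c=0$, i.e. $\tau_f(\phi)=0$, and $\phi$ is $f$-harmonic.

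I expect the only genuinely new ingredient—and the main point to get right—to be the first two paragraphs: confirming that the subharmonicity inequality of Lemma \ref{main lem} persists with no control on $f$, and checking that the growth bound $\sup_{B_r}f=O(r^2F(r))$ together with finiteness of the ordinary $L^2$-norm of $\tau_f(\phi)$ is exactly what is required to satisfy the hypothesis of the weighted Liouville theorem in \cite{WX}. The closing rank argument is then identical to that of Theorem \ref{pro2} and needs no modification.
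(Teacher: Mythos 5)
Your proposal is correct and follows essentially the same route the paper intends: the paper proves Theorem \ref{main thm2'} only by remarking that ``similar arguments apply,'' referring to the detailed Section 4 discussion (subharmonicity from Lemma \ref{main lem}, verification of the growth condition (\ref{ass2}) so that Theorem 4.3 of \cite{WX} yields $|\tau_f(\phi)|=c$ and $\nabla^\phi\tau_f(\phi)=0$) combined with the rank argument from the proof of Theorem \ref{pro2}. Your write-up supplies exactly these steps, and the key estimate $\frac{\sup_{B_r}f}{r^2F(r)}\le C$ is the right way to feed the hypothesis into the weighted Liouville theorem.
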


\begin{theorem}\label{main thm3'}
Let $\phi:(M,g)\longrightarrow(N,h)$ be an $f$-biharmonic map from a complete Riemannian manifold $(M,g)$ into a Riemannian manifold $(N,h)$ of non-positive curvature. If
 \\(i)
 \begin{eqnarray}
\lim_{r\to\infty}\frac{\sup_{B_r}f(x)}{r^2}=0,
 \end{eqnarray}
 where $B_r$ is a geodesic ball of radius $r$ around some point on $M$,
 $$\int_M|d\phi|^2dv_g<\infty, \ and \ \int_Mf|\tau(\phi)|^2dv_g<\infty;$$
\\or
\\(ii) \begin{eqnarray}
\sup_{B_r}f(x)\leq C F(r)r^2
 \end{eqnarray}
 for  a nondecreasing function $F(r)$  such that $\int_a^\infty\frac{1}{rF(r)}dr=\infty$ for some positive constant $a>0$,
 $$Vol(M,g)=\infty, \ and  \ \int_Mf|\tau(\phi)|^2dv_g<\infty,$$
then $\phi$ is a harmonic map.
\end{theorem}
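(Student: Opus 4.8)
The plan is to follow verbatim the template set by the fully-worked proof of Theorem \ref{main thm'}, transplanting each step from the bi-$f$-harmonic setting into the $f$-biharmonic setting by means of the rewritten $f$-biharmonic equation (\ref{F2'}) and the subharmonicity estimate already produced in the proof of Theorem \ref{pro3}. Throughout, the object to track is $w:=f|\tau(\phi)|=|f\tau(\phi)|$, the norm of the section $f\tau(\phi)$ of $\phi^*TN$, and the whole scheme is: establish $\Delta w\ge 0$, apply a volume-growth Liouville theorem to force $w\equiv c$ constant, and then show $c=0$.

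First I would record the differential inequality. Exactly as in the proof of Theorem \ref{pro3}, I differentiate $(f^2|\tau(\phi)|^2+\epsilon)^{1/2}$, use the Bochner formula together with (\ref{F2'}) and the Kato inequality $|\nabla|f\tau(\phi)||\le|\nabla^\phi(f\tau(\phi))|$, and invoke $R^N\le 0$ to annihilate the curvature term; letting $\epsilon\to 0$ gives $\Delta w\ge 0$. A point worth emphasizing is that, unlike the bi-$f$-harmonic case of Lemma \ref{main lem} where a drift term $\langle\nabla\,\cdot\,,\nabla\ln f\rangle$ survives, here (\ref{F2'}) carries no first-order term, so $w$ is an honest nonnegative subharmonic function for the ordinary Laplacian and ordinary volume $dv_g$; this is exactly what lets the weight $f$ be pushed entirely into the integrability hypotheses.

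Next I would invoke a volume-growth Liouville theorem of Karp--Yau type, the unweighted counterpart of Theorem 4.3 in \cite{WX} used above (cf. \cite{Yau}): a nonnegative subharmonic $w$ with $\overline{\lim}_{r\to\infty}\frac{1}{r^2F(r)}\int_{B_r}w^2\,dv_g<\infty$ is constant. Under hypothesis (ii), $\int_{B_r}w^2\,dv_g=\int_{B_r}f^2|\tau(\phi)|^2\,dv_g\le(\sup_{B_r}f)\int_M f|\tau(\phi)|^2\,dv_g\le CF(r)r^2\int_M f|\tau(\phi)|^2\,dv_g$, so the hypothesis is met with this $F$; under hypothesis (i) the same estimate with $F\equiv 1$ and $\sup_{B_r}f=o(r^2)$ gives $\frac{1}{r^2}\int_{B_r}w^2\,dv_g\to 0$. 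In both cases I conclude $w=f|\tau(\phi)|\equiv c$ is constant and, reading the Bochner identity backwards, $\nabla^\phi(f\tau(\phi))=0$.

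It remains to prove $c=0$, which is where the difficulty concentrates. Under hypothesis (i) I would mimic the $1$-form argument from the proof of Theorem \ref{pro3}(i): set $\omega(X)=\langle d\phi(X),f\tau(\phi)\rangle$, so that $-\delta\omega=f|\tau(\phi)|^2$ thanks to $\nabla^\phi(f\tau(\phi))=0$, and estimate $\int_{B_r}|\omega|\,dv_g\le c\int_{B_r}|d\phi|\,dv_g\le c\,(\mathrm{Vol}(B_r))^{1/2}(\int_M|d\phi|^2\,dv_g)^{1/2}$. The crux is to make the boundary term sublinear: assuming $c\ne 0$, finiteness of $\int_M f|\tau(\phi)|^2\,dv_g=c^2\int_M f^{-1}\,dv_g$ forces $\int_M f^{-1}\,dv_g<\infty$, whence Cauchy--Schwarz together with $\sup_{B_r}f=o(r^2)$ yields $\mathrm{Vol}(B_r)\le(\sup_{B_r}f)\int_M f^{-1}\,dv_g=o(r^2)$, so $\tfrac1r\int_{B_r}|\omega|\,dv_g\to 0$; Yau's generalized Gaffney theorem then gives $\int_M f|\tau(\phi)|^2\,dv_g=-\int_M\delta\omega\,dv_g=0$, a contradiction. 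Under hypothesis (ii) I would argue as in Theorem \ref{main thm'}(ii), deducing $c=0$ from $\mathrm{Vol}(M)=\infty$. The genuine obstacle is precisely this last deduction once $f$ is unbounded: one no longer has the comparison $\int_M f^2|\tau(\phi)|^2\le K\int_M f|\tau(\phi)|^2$ that bounded $f$ supplied in Corollary \ref{main thm3}, so the growth bound (\ref{ass1'}) must be pressed into controlling the volume of geodesic balls carefully enough to reconcile the constancy $f|\tau(\phi)|\equiv c$ with the infinite-volume hypothesis and thereby force the constant to vanish.
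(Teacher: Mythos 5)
Your overall scheme is exactly the paper's: the paper's own ``proof'' of this theorem is the single sentence ``similar arguments apply,'' pointing back to the detailed argument for Theorem \ref{main thm'} and the sketch of Theorem \ref{pro3}, and most of your transplantation is carried out correctly. The subharmonicity $\Delta\bigl(f|\tau(\phi)|\bigr)\ge 0$ obtained from (\ref{F2'}) with no drift term, the Karp--Yau volume-growth Liouville theorem applied to $w=f|\tau(\phi)|$ via $\int_{B_r}w^2\,dv_g\le(\sup_{B_r}f)\int_Mf|\tau(\phi)|^2\,dv_g$, and the conclusions $w\equiv c$ and $\nabla^\phi(f\tau(\phi))=0$ all match the intended argument. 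Your case (i) is in fact more careful than the paper's sketch: the chain $c\ne 0\Rightarrow\int_Mf^{-1}\,dv_g=c^{-2}\int_Mf|\tau(\phi)|^2\,dv_g<\infty\Rightarrow\mathrm{Vol}(B_r)\le(\sup_{B_r}f)\int_Mf^{-1}\,dv_g=o(r^2)$ is precisely the right replacement for the finite-volume step used in Theorem \ref{main thm'}(i), and it legitimately puts Yau's generalized Gaffney theorem into play.

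The gap is case (ii), which you have correctly located but not closed. Saying you ``would argue as in Theorem \ref{main thm'}(ii)'' does not yield a proof: there the deduction $c=0$ rests on comparing $\int_M|\tau_f(\phi)|^2\,dv_g=c^2\,\mathrm{Vol}(M)$ against the volume hypothesis, whereas here constancy only gives $\int_Mf|\tau(\phi)|^2\,dv_g=c^2\int_Mf^{-1}\,dv_g$, and the three surviving facts --- $\mathrm{Vol}(M)=\infty$, $\int_Mf^{-1}\,dv_g<\infty$, and $\sup_{B_r}f\le CF(r)r^2$ --- are mutually consistent (for instance a complete surface of revolution with $\mathrm{Vol}(B_r)\sim\log r$ and $f\sim r^2$ with $F\equiv 1$ satisfies all three), so no contradiction can be extracted from them alone. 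Any proof of (ii) must therefore exploit more than constancy of $f|\tau(\phi)|$ --- e.g.\ the parallelism $\nabla^\phi(f\tau(\phi))=0$ together with the vanishing of the curvature term, or a different test-form argument --- and you do not supply such a step; to be fair, neither does the paper, whose one-line deduction in the bounded case (Corollary \ref{main thm3}(ii), via $\int_Mf^{-1}\,dv_g\ge\mathrm{Vol}(M)/\sup_Mf$) uses boundedness of $f$ in an essential way and does not survive the weakening to (\ref{ass1'}).
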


\begin{theorem}\label{main thm4'}
Let $\phi:(M,g)\longrightarrow (N,h)$ be an $f$-biharmonic map from a complete Riemannian manifold $(M,g)$ into a Riemannian manifold $(N,h)$ of strictly negative curvature. Assume that
 \begin{eqnarray}
\sup_{B_r}f(x)\leq C F(r)r^2,
 \end{eqnarray}
 where $B_r$ is a geodesic ball of radius $r$ around some point on $M$ and $F(r)$ is a nondecreasing function such that $\int_a^\infty\frac{1}{rF(r)}dr=\infty$ for some positive constant $a>0$.
Then if $\int_Mf|\tau(\phi)|^2dv_g<\infty$ and there exists a point $x\in M$ such that $\rank(\phi)(x)\geq2$, $\phi$ is a harmonic map.
\end{theorem}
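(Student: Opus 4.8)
The plan is to run the proof of Theorem \ref{pro4} essentially unchanged, altering only the single place where boundedness of $f$ was used to pass from a subharmonic function to a constant. First I would note that strictly negative sectional curvature in particular gives $R^N\le 0$, so the opening computation in the proof of Theorem \ref{pro3} applies verbatim: writing the $f$-biharmonic equation in the form (\ref{F2'}), computing $\Delta(f^2|\tau(\phi)|^2+\epsilon)^{1/2}$ for $\epsilon>0$, invoking a Kato-type inequality, and letting $\epsilon\to 0$ yields $\Delta(f|\tau(\phi)|)\ge 0$. Thus $u:=f|\tau(\phi)|$ is a nonnegative subharmonic function on $M$ for the \emph{ordinary} Laplacian.

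The key new step is to upgrade this to $u\equiv c$ constant under the growth hypothesis on $f$, in the same spirit as the passage from Corollary \ref{main thm} to Theorem \ref{main thm'} (and its $f$-biharmonic counterpart Theorem \ref{main thm3'}). Since $u$ is subharmonic for the unweighted Laplacian, I would apply the Liouville-type result of \cite{WX} in its $\alpha=0$ form, which forces a nonnegative subharmonic $u$ to be constant once $\overline{\lim}_{r\to\infty}\frac{1}{r^2F(r)}\int_{B_r}u^2\,dv_g<\infty$. The required bound follows from
\[
\frac{1}{r^2F(r)}\int_{B_r}f^2|\tau(\phi)|^2\,dv_g\le\frac{\sup_{B_r}f}{r^2F(r)}\int_M f|\tau(\phi)|^2\,dv_g\le C\int_M f|\tau(\phi)|^2\,dv_g<\infty,
\]
where the middle inequality is precisely the hypothesis $\sup_{B_r}f\le CF(r)r^2$ and the finiteness uses $\int_M f|\tau(\phi)|^2\,dv_g<\infty$. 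Hence $f|\tau(\phi)|=c$ is constant, and substituting back into $\Delta(f|\tau(\phi)|)\ge 0$, now forced to be an equality, gives $\nabla^\phi(f\tau(\phi))=0$.

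It then remains to rule out $c\ne 0$, for which I would repeat the final pointwise argument of Theorem \ref{pro4}. Since $f^2|\tau(\phi)|^2\equiv c^2$ is constant, $0=-\frac{1}{2}\Delta(f^2|\tau(\phi)|^2)$; expanding the right-hand side and using $\nabla^\phi(f\tau(\phi))=0$ together with (\ref{F2'}) collapses it to $\sum_{i=1}^m\langle R^N(f\tau(\phi),d\phi(e_i))d\phi(e_i),f\tau(\phi)\rangle=0$. Because the sectional curvature of $N$ is strictly negative, every summand is $\le 0$ and vanishes only when $d\phi(e_i)$ is proportional to $f\tau(\phi)$; hence all $d\phi(e_i)$ are proportional to $f\tau(\phi)$ and $\rank\phi\le 1$ everywhere, contradicting $\rank\phi(x)\ge 2$ at the given point. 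Therefore $c=0$, so $\tau(\phi)=0$ and $\phi$ is harmonic.

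The main obstacle I anticipate is the bookkeeping in the second paragraph rather than any new idea. One must correctly identify the $L^2$ quantity attached to the ordinarily subharmonic function $u=f|\tau(\phi)|$ as the unweighted $\int_{B_r}f^2|\tau(\phi)|^2\,dv_g$, and then see that a \emph{single} power $\sup_{B_r}f\le CF(r)r^2$ is exactly enough to convert the finite-energy hypothesis $\int_M f|\tau(\phi)|^2\,dv_g<\infty$ into the growth bound demanded by \cite{WX}. This is where the $f$-biharmonic case differs in detail from the bi-$f$-harmonic computation behind Theorem \ref{main thm2'}, in which the natural measure is $f\,dv_g$ and the function is $(-\ln f)$-subharmonic, and the care lies in not conflating the two weightings.
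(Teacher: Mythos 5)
Your proposal is correct and follows essentially the route the paper intends (the paper only states that ``similar arguments apply'' and leaves the details to the template of Theorems \ref{pro3}, \ref{pro4} and \ref{main thm'}): you correctly identify that $u=f|\tau(\phi)|$ is subharmonic for the \emph{unweighted} Laplacian, that the growth bound $\sup_{B_r}f\le CF(r)r^2$ converts the hypothesis $\int_M f|\tau(\phi)|^2dv_g<\infty$ into the condition $\overline{\lim}_{r\to\infty}\frac{1}{r^2F(r)}\int_{B_r}u^2dv_g<\infty$ required by the $\alpha=0$ case of Theorem 4.3 of \cite{WX}, and you then reproduce the pointwise curvature/rank argument of Theorem \ref{pro4}. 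The only cosmetic point is that $\nabla^\phi(f\tau(\phi))=0$ is extracted from the Bochner identity for $\Delta|f\tau(\phi)|^2$ rather than from the Kato-reduced inequality $\Delta(f|\tau(\phi)|)\ge 0$, which is exactly what you do in your final paragraph anyway.
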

 \quad

\section{Appendix}
\begin{theorem}[Yau's generalized Gaffney's theorem]
Let $(M, g)$ be a complete Riemannian manifold. If $\omega$ is a $C^1$ 1-form such
that $\underline{\lim}_{r\to\infty}\int_{B_r}|\omega|dv_g=0$, or equivalently, a $C^1$ vector field $X$ defined by
$\omega(Y) = \langle X, Y \rangle, (\forall Y \in TM)$ satisfying $\underline{\lim}_{r\to\infty}\int_{B_r}|X|dv_g=0$, where $B_r$ is a geodesic ball of radius $r$ around some point on $M$, then $$\int_M\delta\omega dv_g=\int_Mdiv Xdv_g=0.$$
\end{theorem}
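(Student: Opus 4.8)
The plan is to use the standard cutoff-and-exhaustion argument. Write $X$ for the $C^1$ vector field dual to $\omega$, so that the goal is to show $\int_M \operatorname{div}X\,dv_g = 0$; this is the same as $\int_M \delta\omega\,dv_g = 0$, since the two integrands agree up to the sign convention for the codifferential. Fix a base point $o \in M$, let $\rho(x) = d(o,x)$ be the (Lipschitz) distance function, and for each $r>0$ introduce the Lipschitz cutoff $\eta_r = \min\{1,\max\{0, 2 - \rho/r\}\}$, so that $0 \le \eta_r \le 1$, $\eta_r \equiv 1$ on $B_r$, $\operatorname{supp}\eta_r \subseteq B_{2r}$, and $|\nabla\eta_r| \le C/r$ almost everywhere. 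Completeness of $(M,g)$ is what guarantees that the balls $B_r$ are relatively compact and exhaust $M$, which is precisely what makes these cutoffs available.

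The first step is to apply the divergence theorem to the compactly supported Lipschitz field $\eta_r X$. Since $\eta_r X$ vanishes outside $B_{2r}$, there is no boundary contribution and $\int_M \operatorname{div}(\eta_r X)\,dv_g = 0$. Expanding $\operatorname{div}(\eta_r X) = \eta_r\operatorname{div}X + \langle\nabla\eta_r, X\rangle$ gives the identity
\[
\int_M \eta_r\operatorname{div}X\,dv_g = -\int_M \langle\nabla\eta_r, X\rangle\,dv_g .
\]
Because $\nabla\eta_r$ is supported on the annulus $B_{2r}\setminus B_r$ and bounded there by $C/r$, the right-hand side is controlled by
\[
\Bigl|\int_M \langle\nabla\eta_r, X\rangle\,dv_g\Bigr| \le \frac{C}{r}\int_{B_{2r}\setminus B_r}|X|\,dv_g \le \frac{C}{r}\int_{B_{2r}}|X|\,dv_g .
\]

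The second step is to pass to the limit along a suitable sequence $r_k \to \infty$. The growth hypothesis on $\int_{B_r}|X|\,dv_g$ supplies a sequence along which the right-hand side above tends to $0$; on the left, since $\eta_{r_k}\nearrow 1$ pointwise and $\operatorname{div}X \in L^1(M)$ (which holds in all the applications in this paper, where $\operatorname{div}X = |\tau_f(\phi)|^2$ or $f|\tau(\phi)|^2$ is integrable), dominated convergence yields $\int_M \eta_{r_k}\operatorname{div}X\,dv_g \to \int_M \operatorname{div}X\,dv_g$. Passing to the limit in the identity therefore forces $\int_M \operatorname{div}X\,dv_g = 0$, equivalently $\int_M \delta\omega\,dv_g = 0$.

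I expect the \emph{main obstacle} to lie in the limit passage and the precise growth condition rather than in the cutoff construction. Specifically, one must (i) justify using the distance function $\rho$, which is only Lipschitz and fails to be smooth on the cut locus, inside the divergence theorem, which is handled either by working with Lipschitz fields directly or by smoothing $\rho$ while preserving $|\nabla\eta_r| \le C/r$; and (ii) reconcile the stated hypothesis with the annular estimate above, where the natural condition that closes the argument is $\frac{1}{r}\int_{B_r}|X|\,dv_g \to 0$ along a subsequence, equivalently $\int_{B_{2r}\setminus B_r}|X|\,dv_g = o(r)$, so that the factor $1/r$ defeats the annular mass. The integrability of $\operatorname{div}X$ needed for the left-hand limit is the remaining ingredient; when $\operatorname{div}X$ has a fixed sign one may replace dominated convergence by monotone convergence and reach the same conclusion without assuming $\operatorname{div}X \in L^1$ in advance.
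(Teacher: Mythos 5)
Your argument is correct, but note that the paper does not actually prove this statement: its ``proof'' is the single line ``See Appendix in \cite{Yau}.'' So you are supplying a proof where the authors supply a citation. Your cutoff-and-exhaustion argument is the standard functional-analytic route and is sound; Yau's own appendix argument is the geometric twin of it, applying Stokes' theorem on the geodesic balls $B_r$ directly and using the coarea formula to extract a sequence of radii along which the boundary term $\int_{\partial B_r}|\omega|$ is small, rather than smearing the boundary with a cutoff. The two are essentially interchangeable; yours avoids any discussion of the regularity of $\partial B_r$ at the cost of having to justify the divergence theorem for the Lipschitz field $\eta_r X$, which you do. Two points you flag are worth making explicit. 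First, the hypothesis as printed, $\underline{\lim}_{r\to\infty}\int_{B_r}|\omega|\,dv_g=0$, is surely a typo: since $r\mapsto\int_{B_r}|\omega|\,dv_g$ is nondecreasing, that condition would force $\omega\equiv 0$; the intended condition is $\underline{\lim}_{r\to\infty}\frac{1}{r}\int_{B_r}|\omega|\,dv_g=0$, which is exactly what the paper verifies in its applications (via $\int_M|\omega|\,dv_g<\infty$ in Section 2 and via the $\frac{1}{r}\int_{B_r}|\omega|$ computation in Section 4) and exactly what your annular estimate needs. Second, the limit on the left-hand side does require $\di X\in L^1(M)$ or $\di X$ of fixed sign for the conclusion $\int_M \di X\,dv_g=0$ to be meaningful; this hypothesis is implicit in Yau's original statement and is satisfied in every use in this paper, where $-\delta\omega=|\tau_f(\phi)|^2\ge 0$ or $f|\tau(\phi)|^2\ge 0$, so monotone convergence closes the argument as you observe.
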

\proof See Appedix in \cite{Yau}.
\vskip1cm
\textbf{Acknowledgements:}
A part of the work was done when Yong Luo
was a visiting scholar at Tsinghua University. He would like to express his gratitude to
Professors Yuxiang Li and Hui Ma for their invitation and to Tsinghua University for the hospitality.

\end{document}